\providecommand{\algorithmname}{Algorithm}
\theoremstyle{plain}
\newtheorem{thm}{\protect\theoremname}
  \theoremstyle{plain}
  \newtheorem{prop}[thm]{\protect\propositionname}
  \theoremstyle{plain}
  \newtheorem{lem}[thm]{\protect\lemmaname}
\newcommand{\eqref}[1]{(\ref{#1})}
  \providecommand{\lemmaname}{Lemma}
  \providecommand{\propositionname}{Proposition}
\providecommand{\theoremname}{Theorem}
\begin{document}

\review{Total Variation Superiorized Conjugate Gradient Method for Image
Reconstruction}

\author{Marcelo V. W. Zibetti$^{\text{1}}$, Chuan Lin$^{\text{2}}$ and
Gabor T. Herman$^{\text{3}}$ }

\address{$^{\text{1}}$Graduate Program in Electrical Engineering and Computer
Science, Federal University of Technology - Paraná, Curitiba, 80230-901,
Brazil}

\address{$^{\text{2}}$School of Electrical Engineering, Southwest Jiaotong
University, Chengdu, Sichuan 610031, China}

\address{$^{\text{3}}$Department of Computer Science, The Graduate Center,
City University of New York, NY 10016, USA}

\ead{marcelozibetti@utfpr.edu.br, lin\_langai@163.com, gabortherman@yahoo.com}
\begin{abstract}
The conjugate gradient (CG) method is commonly used for the relatively-rapid
solution of least squares problems. In image reconstruction, the problem
can be ill-posed and also contaminated by noise; due to this, approaches
such as regularization should be utilized. Total variation (TV) is
a useful regularization penalty, frequently utilized in image reconstruction
for generating images with sharp edges. When a non-quadratic norm
is selected for regularization, as is the case for TV, then it is
no longer possible to use CG. Non-linear CG is an alternative, but
it does not share the efficiency that CG shows with least squares
and methods such as fast iterative shrinkage-thresholding algorithms
(FISTA) are preferred for problems with TV norm. A different approach
to including prior information is superiorization. In this paper it
is shown that the conjugate gradient method can be superiorized. Five
different CG variants are proposed, including preconditioned CG. The
CG methods superiorized by the total variation norm are presented
and their performance in image reconstruction is demonstrated. It
is illustrated that some of the proposed variants of the superiorized
CG method can produce reconstructions of superior quality to those
produced by FISTA and in less computational time, due to the speed
of the original CG for least squares problems. In the Appendix we
examine the behavior of one of the superiorized CG methods (we call
it S-CG); one of its input parameters is a positive number $\varepsilon$.
It is proved that, for any given $\varepsilon$ that is greater than
the half-squared-residual for the least squares solution, S-CG terminates
in a finite number of steps with an output for which the half-squared-residual
is less than or equal to $\varepsilon$. Importantly, it is also the
case that the output will have a lower value of TV than what would
be provided by unsuperiorized CG for the same value $\varepsilon$
of the half-squared residual.
\end{abstract}

\noindent{\it Keywords\/}: {image reconstruction, superiorization, total variation, conjugate
gradient.}

\ams{15A29, 65F22, 65F10, 94A08}

\submitto{Inverse Problems.}

\maketitle

\section{\label{sec:Introduction}Introduction}

Solving least squares problems with linear models has a long history
in image reconstruction. The least squares (LS) problem for image
reconstruction is stated as: \textbf{find}
\begin{equation}
\mathbf{x}_{LS}=\arg\min_{\mathbf{x}}\frac{1}{2}\left\Vert \mathbf{y-Ax}\right\Vert _{2}^{2},\label{eq:LS1}
\end{equation}
where $\mathbf{y}$ is an $L$-dimensional vector containing the captured
data, usually the sinogram in computed tomography (CT), $\mathbf{x}$
is a $J$-dimensional vector containing the image to be reconstructed,
and $\mathbf{A}$ is an $L\times J$ system matrix, used for discrete
approximation of the line integrals or the Radon transform in CT.

The LS problem is convex and its solution satisfies the linear system
\begin{equation}
\mathbf{A}^{T}\mathbf{A}\mathbf{x}_{LS}=\mathbf{A}^{T}\mathbf{y}.\label{eq:LS2}
\end{equation}
This is a widely understood approach to the LS problem, with many
tools available from linear algebra to solve it or analyze it \cite{Bjorck1996,Barrett-2004}.
LS is also strongly connected to Maximum Likelihood (ML) estimation
for Gaussian noise models \cite{Kim2008}, which is a well-understood
topic from statistics.

Due to the large dimensionality of the problems in image reconstruction,
direct solving of (\ref{eq:LS2}) is not practical and iterative methods
are usually preferred. The conjugate gradient (CG) method \cite{Vogel-2002}
is a good choice since it converges very rapidly (just a few iterations
often provide acceptable results) and frequently it can use fast computational
operators instead of numerical matrices. When the system is ill-conditioned,
it is still possible to use preconditioning, getting even faster convergence. 

In image reconstruction problems (where $\mathbf{A}^{T}\mathbf{A}$
is singular or ill-conditioned and data are noisy) regularization
should be utilized. This can be done by adding a penalty to the right
hand side of (\ref{eq:LS1}) (commonly used penalties are total variation
(TV) norm and sparsity promoting norms, such as the $\ell_{1}$-norm
of a transform of $\mathbf{x}$; see, for example, \cite{Bovik-2000,Romberg2008}).
This results in changing (\ref{eq:LS1}) to
\begin{equation}
\mathbf{x}_{RLS}=\arg\min_{\mathbf{x}}\left(\frac{1}{2}\left\Vert \mathbf{y-Ax}\right\Vert _{2}^{2}+\lambda R(\mathbf{x})\right),\label{eq:RLS1}
\end{equation}
whose solution satisfies
\begin{equation}
\mathbf{A}^{T}\mathbf{A}\mathbf{x}_{RLS}+\lambda\nabla R(\mathbf{x}_{RLS})=\mathbf{A}^{T}\mathbf{y},\label{eq:RLS2}
\end{equation}
which is a nonlinear system of equations. 

One method for solving (\ref{eq:RLS2}) is nonlinear CG (NLCG), which
is a direct generalization of CG for solving nonlinear systems \cite{Dai2000,Hager2006}.
However, if $R(\mathbf{x})$ is an $\ell_{1}$-norm or the TV norm,
then NLCG may face some issues, such as non-differentiability and
the lack of a specific fast line search. 

A way of dealing with non-differentiability is to use subgradients
\cite[Chapter 8]{Rockafellar1998}, but this may put the optimization
process into trouble, since the negative of a subgradient is not necessarily
a descent direction. This difficulty can be overcome by using a strictly
convex approximation that replaces the non-differentiable norm with
a differentiable one, such as the Hyperbola function or the Huber
norm \cite{Zhao-2012}. For example, if $R(\mathbf{x})=||\mathbf{\mathbf{R\mathbf{x}||_{1}}}$,
where $\mathbf{R}$ is an $n\times J$ matrix, then $\nabla R(\mathbf{x})\approx\mathbf{\mathbf{R}}^{T}\mathbf{W\mathbf{\mathbf{R\mathbf{x}}}}$
for
\begin{equation}
\mathbf{W}=diag\left(\frac{1}{\sqrt{w_{1}^{2}+\kappa^{2}}},\cdots,\frac{1}{\sqrt{w_{n}^{2}+\kappa^{2}}}\right),\label{eq:rounding1}
\end{equation}
where $\mathbf{w}=\left(w_{1,}\ldots,w_{n}\right)^{T}=\mathbf{R}\mathbf{x}$
and $\kappa$ a small positive number. For the lack of a specific
line search in NLCG, we note that some alternatives exist when $\ell_{1}$-norm
is used \cite{Zibetti2016a}, but for other cases general line searches,
such as back tracking (which may be costly), has to be used \cite{Vogel-2002}.
Such considerations indicate that NLCG is not as good a choice for
solving (\ref{eq:RLS1}) as CG is for solving the least squares problem
(\ref{eq:LS1}).

For such reasons, other approaches are preferred. One of these is
forward-backward splitting \cite{Combettes2011}, which is utilized
by the iterative shrinkage-thresholding algorithm (ISTA) \cite{Zibulevsky2010},
TWIST \cite{Bioucas-Dias2007} and fast ISTA (FISTA) \cite{Beck2009}
that are based on proximal operators and Nesterov acceleration. In
these approaches, the gradient of only the quadratic part of (\ref{eq:RLS1})
is utilized and is combined with the proximal operator for the non-differentiable
part $R(\mathbf{x})$. In ISTA, we can write the iterations as
\begin{equation}
\mathbf{x}_{k+1}=\text{prox}_{\nicefrac{\lambda}{c^{2}},R(\mathbf{x})}\left(\mathbf{x}_{k}+\frac{1}{c^{2}}\mathbf{A}^{T}(\mathbf{\mathbf{y}-A}\mathbf{x}_{k})\right),\label{eq:ISTA}
\end{equation}
where the proximal operator is defined as
\begin{equation}
\text{prox}_{\alpha,R(\mathbf{x})}(\mathbf{b})=\arg\min_{\mathbf{x}}\left(\frac{1}{2}\left\Vert \mathbf{b-x}\right\Vert _{2}^{2}+\alpha R(\mathbf{x})\right).\label{eq:prox}
\end{equation}

The proximal operator has been demonstrated to deal well with the
non-differentiability of the $\ell_{1}$-norm and TV. Proximal operators
look like denoising \cite{Parikh2013}, but they are connected with
the subdifferential and are known to satisfy (\ref{eq:RLS2}) at convergence
if $\nabla R(\mathbf{x})$ is a subgradient of $R(\mathbf{x})$. Proximal
operators for some $R(\mathbf{x})$ are quite simple to implement
in practice; for example, the proximal operator of the $\ell_{1}$-norm
can be found by soft-thresholding or shrinkage-thresholding. Other
cases, including total variation, are not so easy. For TV, some recent
approaches using a dual formulation \cite{Chambolle2004,Chambolle2011}
allow us to use iterative algorithms, such as the gradient projection
(GP) and the fast gradient projection (FGP) \cite{Beck2009a} algorithms
as an iterative solution for the proximal operator. 

Even though regularization is very commonly used, it is not the only
way of including prior information into the solution; a more-recently
introduced approach is superiorization \cite{Censor2010,Herman2012}.
With a similar purpose to Tikhonov regularization \cite{Vogel-2002,Hansen-1998},
superiorization makes use of extra information about the desired image
to improve the reconstructed result. However, instead of adding a
side penalty to the original optimization problem, superiorization
preserves the original iterative algorithm and includes perturbation
steps between iterations. These steps shift the solutions obtained
along the iterations to a better path of solutions; ones that are
superior according to a secondary criterion, while preserving performance
according to the original optimization criterion. Published proofs
of such behavior assume that the iterative algorithm that is being
superiorized has the property of being ``perturbation resilient'';
see, for example, \cite{Censor2010,Herman2012}.

Regularization changes an originally ill-posed problem into a different
well-posed problem, that may require a new optimization algorithm
for solving it. (This was the case when regularizing least squares
with sparsity priors.) On the other hand, superiorization is an algorithm-preserving
approach, which perturbs the original algorithm towards a better path
of intermediate solutions. Considering a general algorithmic step
$\mathbf{x}_{k+1}=\mathbf{P}(\mathbf{x}_{k})$, superiorization changes
it to

\begin{equation}
\mathbf{x}_{k+1}=\mathbf{P}(\mathbf{x}_{k}+\gamma_{k}\mathbf{v}_{k}),\label{eq:superiorization}
\end{equation}
where $\mathbf{v}_{k}$ is a nonascending direction for $R(\mathbf{x})$
(which might be obtained using gradient, proximal operator or something
else) and $\gamma_{k}$ is a step size that ensures that the next
point is superiorized according to the cost provided by the secondary
criterion, i.e., $R(\mathbf{\mathbf{x_{\mathit{k}}}+\gamma_{\mathit{k}}\mathbf{v}_{\mathit{k}}})\leq R(\mathbf{x_{\mathit{k}}})$.
Therefore we do not need an essentially new algorithm, but rather
just the inclusion of superiorization steps, which shift the solution
of the original algorithm to a better solution at each iteration.
When appropriate conditions are satisfied, both regularization and
superiorization provide good reconstructed images. 

Superiorization has been proven successful in many applications \cite{Censor2010,Herman2012,Censor2014,Garduno2014,Schrapp2014,Helou2016}.
In this paper, superiorization is applied to the conjugate gradient
algorithm to solve least squares problems, using the proximity function\footnote{As defined in the superiorization literature, the proximity function
$f$ is a measure of how badly the constraints are violated; it is
not to be confused with the proximal operator as defined in (\ref{eq:prox}). } $f$ that is defined by the half-squared-residual $f(\mathbf{x})=\frac{1}{2}\left\Vert \mathbf{y-Ax}\right\Vert _{2}^{2}$
as the primary criterion and total variation as the secondary criterion
(see, for example, \cite{Herman2012}). Our aim was to keep the good
speed of CG, in spite of it being perturbed for superiorization.

It is demonstrated in this paper that there are a number of ways to
achieve our aim; we propose five different versions and some of which
are very effective. Experimental results illustrate superiorized CG
producing reconstructed images of similar quality to those produced
by FISTA, but faster.

The paper is organized as follows. Section \ref{sec:CT_Problem} presents
the CG method and an introduction to superiorization. In Section \ref{sec:Superiorization}
the proposed superiorized CG methods are presented. In Section \ref{sec:Experiments}
the results of some experiments comparing various methods are shown.
The conclusions and a discussion are in Section \ref{sec:Conclusions}.
A convergence proof of one of the proposed superiorized CG methods
is presented in the Appendix.

\section{\label{sec:CT_Problem}Review of Methods}

\begin{algorithm}[!tbp]
\caption{Standard Linear Conjugate Gradient $\mathrm{CG}(\mathbf{A},\mathbf{y},\mathbf{x}_{0},\varepsilon,\mathbf{x})$}

\label{CG}

\begin{algorithmic}[1]

\STATE{\textbf{set} $k=0$}

\STATE{\textbf{set }$\mathbf{g}_{0}=\mathbf{A}^{T}(\mathbf{A}\mathbf{x}_{0}-\mathbf{y})$}

\STATE{\textbf{set }$\mathbf{p}_{0}=-\mathbf{g}_{0}$}

\STATE{\textbf{set} $\delta_{0}=\left\Vert \mathbf{g}_{0}\right\Vert _{2}^{2}$}

\STATE{\textbf{while }$f(\mathbf{x}_{k})>\varepsilon$ }\label{sc}

\STATE{~~~~\textbf{call} CG-STEP$(\mathbf{A},\mathbf{x}_{k},\mathbf{p}_{k},\mathbf{g}_{k},\delta_{k},\mathbf{x}_{k+1},\mathbf{p}_{k+1},\mathbf{g}_{k+1},\delta_{k+1})$}\label{CG-call-CG-STEP}

\STATE{~~~~\textbf{set $k=k+1$}} 

\STATE{\textbf{set} $\mathbf{x=x}_{k}$ } 

\end{algorithmic}
\end{algorithm}

\medskip{}

\begin{algorithm}[!tbp]
\caption{CG-STEP$(\mathbf{A},\mathbf{x},\mathbf{p},\mathbf{g},\delta,\mathbf{x}',\mathbf{p}',\mathbf{g}',\delta')$}

\label{CG-STEP}

\begin{algorithmic}[1]

\STATE{\textbf{set} $\mathbf{h}=\mathbf{A}^{T}\mathbf{A}\mathbf{p}$}

\STATE{\textbf{set} $\alpha=\delta/\mathbf{p}^{T}\mathbf{h}$}\label{set alpha_k}

\STATE{\textbf{set }$\mathbf{x}'=\mathbf{x}+\alpha\mathbf{p}$}

\STATE{\textbf{set }$\mathbf{g}'=\mathbf{g}+\alpha\mathbf{h}$}

\STATE{\textbf{set }$\delta'=\left\Vert \mathbf{g}'\right\Vert _{2}^{2}$}

\STATE{\textbf{set }$\beta=\delta'/\delta$}\label{set beta_k}

\STATE{\textbf{set }$\mathbf{p}'=\mathbf{-g}'+\beta\mathbf{p}$}

\end{algorithmic}
\end{algorithm}

\medskip{}
The conjugate gradient algorithm (CG) is an effective way of solving
(\ref{eq:LS2}); its details are provided in Algorithms \ref{CG}
and \ref{CG-STEP}. In the parameter list of the algorithm $\mathrm{CG}(\mathbf{A},\mathbf{y},\mathbf{x}_{0},\varepsilon,\mathbf{x})$,
the input parameters are the matrix $\mathbf{A}$ and the vector $\mathbf{y}$
(as specified immediately after (\ref{eq:LS1})), the initial guess
$\mathbf{x}_{0}$ at the solution (a $J$-dimensional vector), the
error $\varepsilon$ that we are willing to tolerate according to
the primary criterion and the output parameter is $\mathbf{x}$, which
is an approximation to the desired $\mathbf{x}_{LS}$ of (\ref{eq:LS1}).

Using $k$ as the iteration counter, CG terminates as soon as $f(\mathbf{x}_{k})\leq\varepsilon$
(recall that $f(\mathbf{x})=\frac{1}{2}\left\Vert \mathbf{y-Ax}\right\Vert _{2}^{2}$
is the primary criterion), where $\varepsilon$ is a user-specified
input parameter. Given the user-specified initial guess $\mathbf{x}_{0}$
at the solution, the initial vectors $\mathbf{g}_{0}$ and $\mathbf{p}_{0}$
in CG are specified as

\begin{equation}
\mathbf{g}_{0}=\mathbf{A}^{T}(\mathbf{A}\mathbf{x}_{0}-\mathbf{y}),\label{eq: initial g}
\end{equation}

\begin{equation}
\mathbf{p}_{0}=-\mathbf{g}_{0}.\label{eq: initial p}
\end{equation}
In each iteration the tentative solution is updated by 

\begin{equation}
\mathbf{x}_{k+1}=\mathbf{x}_{k}+\alpha_{k}\mathbf{p}_{k},
\end{equation}
where $\mathbf{p}_{k}$ is a search direction with step-size $\alpha_{k}$
(defined as $\alpha$ in Step \ref{set alpha_k} of Algorithm \ref{CG-STEP}).
Part of the efficiency of CG is due to the facts that each new search
direction is conjugated with all previous directions and the step-size
is optimal for minimizing the quadratic function that is the residual
of (\ref{eq:LS2}). The directions are created according to

\begin{equation}
\mathbf{p}_{k+1}=\mathbf{-g}_{k+1}+\beta_{k}\mathbf{p}_{k}\label{eq:CGdir}
\end{equation}
(with $\beta_{k}$ defined as $\beta$ in Step \ref{set beta_k} of
Algorithm \ref{CG-STEP}), where the new gradient is given by

\begin{equation}
\mathbf{g}_{k+1}=\mathbf{A}^{T}(\mathbf{A}\mathbf{x}_{k+1}-\mathbf{y}),\label{eq: g_k+1}
\end{equation}
considering the new point $\mathbf{x}_{k+1}$, or recursively, which
is computationally more efficient, by
\begin{equation}
\mathbf{g}_{k+1}=\mathbf{g}_{k}+\alpha_{k}\mathbf{A}^{T}\mathbf{A}\mathbf{p}_{k}.\label{eq:recursive}
\end{equation}

The computation of the new direction removes part of the previous
direction from the gradient; the specified $\beta_{k}$ results in
\begin{equation}
\mathbf{p}_{k+1}^{T}\mathbf{A}^{T}\mathbf{A}\mathbf{p}_{k}=(\mathbf{-g}_{k+1}+\beta_{k}\mathbf{p}_{k})^{T}\mathbf{A}^{T}\mathbf{A}\mathbf{p}_{k}=0,
\end{equation}
which shows that the directions are conjugated.

In each iteration Step \ref{CG-call-CG-STEP} of CG calls CG-STEP$(\mathbf{A},\mathbf{x}_{k},\mathbf{p}_{k},\mathbf{g}_{k},\delta_{k},\mathbf{x}_{k+1},\mathbf{p}_{k+1},\mathbf{g}_{k+1},\delta_{k+1})$,
which means that it requires several items from the previous iteration.
Thus writing the algorithmic step of CG as $\mathbf{x}_{k+1}=\mathbf{P}(\mathbf{x}_{k})$
is an oversimplification; the vectors $\mathbf{x}_{k+1},\:\mathbf{g}_{k+1},\:\mathbf{p}_{k+1}$
depend on all three of $\mathbf{x}_{k},\:\mathbf{g}_{k},\:\mathbf{p}_{k}$.
This matters when we try to superiorize CG, for the following essential
reason: the proof of convergence of the sequence $\left(\mathbf{x}_{k}\right)_{k=0}^{\infty}$
produced by CG to $\mathbf{x}_{LS}$ involves the maintenance, as
iterations proceed, of certain mathematical relations involving $\mathbf{x}_{k},\:\mathbf{g}_{k},\:\mathbf{p}_{k}$.
It is therefore not advisable (for convergence according to the primary
criterion) to superiorize by updating $\mathbf{x}_{k}$ according
to (\ref{eq:superiorization}), without taking care of the matching
updates of $\mathbf{g}_{k}$ and $\mathbf{p}_{k}$ so that the mathematical
relations used in the proof of convergence of the sequence$\left(\mathbf{x}_{k}\right)_{k=0}^{\infty}$
are maintained.

The conjugate gradient method is generally-speaking fast, however
its convergence speed can be affected by the conditioning of the matrix
$\mathbf{A}^{T}\mathbf{A}$, being slower as the condition number
increases \cite{Vogel-2002}. In computed tomography problems, this
may mean more iterations and a very long reconstruction time. In order
to achieve a good speed of convergence, preconditioned conjugate gradient
(PCG) can be a good alternative. In this work we utilize a preconditioning
matrix $\mathbf{M}$ constructed from the filtering part of filtered
backprojection reconstruction method \cite[Chapter 8]{Herman2009}.
Each PCG iteration acts very similarly, but not exactly, to a filtered
backprojection reconstruction. In Algorithm \ref{PCG}, we describe
the standard PCG, which at each iteration calls PCG-STEP$(\mathbf{A},\mathbf{x}_{k},\mathbf{p}_{k},\mathbf{g}_{k},\delta_{k},\mathbf{x}_{k+1},\mathbf{p}_{k+1},\mathbf{g}_{k+1},\delta_{k+1},\mathbf{M})$,
as described in Algorithm \ref{PCG-STEP}.\footnote{It is not desirable for Algorithm \ref{PCG-STEP} to act exactly like
an inverse (as does filtered backprojection) because this would result
in getting from the initial point to one satisfying $f(\mathbf{x}_{k})<\varepsilon$
too quickly, not letting the superiorization to do its task of improving
the solution according to the secondary criterion.}

The computation of the new direction in PCG removes part of the previous
direction from $\mathbf{z}_{k+1}$, instead of from the gradient.
The specified $\beta_{k}$ (defined as $\beta$ in Step \ref{set P_beta_k}
of Algorithm \ref{PCG-STEP}) results in
\begin{equation}
\mathbf{p}_{k+1}^{T}\mathbf{A}^{T}\mathbf{A}\mathbf{p}_{k}=(\mathbf{-z}_{k+1}+\beta_{k}\mathbf{p}_{k})^{T}\mathbf{A}^{T}\mathbf{A}\mathbf{p}_{k}=0.
\end{equation}

In the next section we present five superiorized CG-like algorithms.
In the Appendix we discuss a proof of convergence for one such algorithm.
In Section \ref{sec:Experiments} we report on experimental results
for all five algorithms and compare such results with what can be
obtained by pure CG and also with FISTA, which is one of most efficacious
previously-published alternatives to superiorization for solving (\ref{eq:RLS2}).
We devote the rest of this section to a discussion relevant to FISTA.

As indicated in the Introduction, the NLCG approach for solving (\ref{eq:RLS2})
is based on making use of the gradient $\mathbf{g}_{k}=\mathbf{A}^{T}(\mathbf{A}\mathbf{x}_{k}-\mathbf{y})+\lambda\nabla R(\mathbf{x}_{k})$,
relying on different options, as seen in \cite{Vogel-2002,Hager2006,Dai2013,Luenberger2008}\textcolor{black}{.
However, for non-differentiable $R(\mathbf{x}_{k})$, such as TV or
the $\ell_{1}$-norm, the concept of subdifferential needs to be considered
}\cite[Chapter 8]{Rockafellar1998}\textcolor{black}{. If we substitute
any subgradient in the subdifferential for $\nabla R(\mathbf{x}_{k})$
in the previous formula for $\mathbf{g}_{k}$, then $-\mathbf{g}_{k}$
is not necessarily a descent direction and NLCG may fail to converge.}

\begin{algorithm}[!tbp]
\caption{Standard Preconditioned Conjugate Gradient $\mathrm{PCG}(\mathbf{A},\mathbf{y},\mathbf{x}_{0},\varepsilon,\mathbf{x},\mathbf{M})$}

\label{PCG}

\begin{algorithmic}[1]

\STATE{\textbf{set} $k=0$}

\STATE{\textbf{set }$\mathbf{g}_{0}=\mathbf{A}^{T}(\mathbf{A}\mathbf{x}_{0}-\mathbf{y})$}

\STATE{\textbf{set }$\mathbf{z}_{0}=\mathbf{M}\mathbf{g}_{0}$}

\STATE{\textbf{set }$\mathbf{p}_{0}=-\mathbf{z}_{0}$}

\STATE{\textbf{set }$\delta_{0}=\mathbf{g}_{0}^{T}\mathbf{z}_{0}$}

\STATE{\textbf{while }$f(\mathbf{x}_{k})>\varepsilon$ }

\STATE{~~~~\textbf{call} PCG-STEP$(\mathbf{A},\mathbf{x}_{k},\mathbf{p}_{k},\mathbf{g}_{k},\delta_{k},\mathbf{x}_{k+1},\mathbf{p}_{k+1},\mathbf{g}_{k+1},\delta_{k+1},\mathbf{M})$}

\STATE{~~~~\textbf{set $k=k+1$}} 

\STATE{\textbf{set} $\mathbf{x=x}_{k}$ } 

\end{algorithmic}
\end{algorithm}

\begin{algorithm}[!tbp]
\caption{PCG-STEP$(\mathbf{A},\mathbf{x},\mathbf{p},\mathbf{g},\delta,\mathbf{x}',\mathbf{p}',\mathbf{g}',\delta',\mathbf{M})$}

\label{PCG-STEP}

\begin{algorithmic}[1]

\STATE{\textbf{set} $\mathbf{h}=\mathbf{A}^{T}\mathbf{A}\mathbf{p}$}

\STATE{\textbf{set} $\alpha=\delta/\mathbf{p}^{T}\mathbf{h}$}

\STATE{\textbf{set }$\mathbf{x}'=\mathbf{x}+\alpha\mathbf{p}$}

\STATE{\textbf{set }$\mathbf{g}'=\mathbf{g}+\alpha\mathbf{h}$}

\STATE{\textbf{set }$\mathbf{z}'=\mathbf{M}\mathbf{g'}$}

\STATE{\textbf{set }$\delta'=\mathbf{g'}^{T}\mathbf{z'}$}

\STATE{\textbf{set }$\beta=\delta'/\delta$}\label{set P_beta_k}

\STATE{\textbf{set }$\mathbf{p}'=\mathbf{-z}'+\beta\mathbf{p}$}

\end{algorithmic}
\end{algorithm}

For this reason, proximal operators such as the ones used in (\ref{eq:ISTA})
do a better job. The FISTA algorithm \cite{Beck2009} is specified
by

\begin{equation}
\mathbf{x}_{k}=\text{prox}_{\nicefrac{\lambda}{c^{2}},R(\mathbf{x})}\left(\mathbf{u}_{k}+\frac{1}{c^{2}}\mathbf{A}^{T}(\mathbf{\mathbf{y}-A}\mathbf{u}_{k})\right),\label{eq:FISTA}
\end{equation}
where
\begin{equation}
\mathbf{u}_{k+1}=\mathbf{x}_{k}+\frac{t_{k}-1}{t_{k+1}}\left(\mathbf{x}_{k}-\mathbf{x}_{k-1}\right)\label{eq:FISTA-u}
\end{equation}
and $t_{k+1}=\left(1+\sqrt{1+4t_{k}^{2}}\right)/2$, with $t_{1}=1$
and $\mathbf{u}_{1}=\mathbf{x}_{0}$. This generates a momentum that
accelerates the ISTA algorithm specified by (\ref{eq:ISTA}), giving
to FISTA a convergence rate of order $1/k^{2}$, as compared to $1/k$
that is the order of the convergence rate of ISTA.

\section{\label{sec:Superiorization}Superiorizing the Conjugate Gradient
Method}

In this section we introduce five superiorized CG-like algorithms.
We start with a discussion of notation. Roughly speaking, if a not-superiorized
iterative algorithm produces a sequence $\left(\mathbf{x}_{k}\right)_{k=0}^{\infty}$
that converges to $\mathbf{x}_{LS}$ by using algorithmic steps of
the kind $\mathbf{x}_{k+1}=\mathbf{P}(\mathbf{x}_{k})$, then its
superiorized version works according to a formula such as (\ref{eq:superiorization}),
in which $\mathbf{v}_{k}$ is a nonascending direction for $R(\mathbf{x})$
and $\gamma_{k}$ is a step size that ensures that the next point
is superior according to the cost provided by the secondary criterion;
see, for example, \cite{Herman2012}. It has been shown that, under
reasonable assumptions, if the sequence of the $\gamma_{k}$ is summable,
then the superiorized sequence of the $\mathbf{x}_{k}$ will retain
the desirable property of the original algorithm of producing a good
approximation to $\mathbf{x}_{LS}$. In practice, the $\gamma_{k}$
are typically chosen to be a subsequence of 
\begin{equation}
\gamma_{\ell}=\gamma_{0}a^{\ell},\label{eq:gamma_sequence}
\end{equation}
where both $\gamma_{0}$ and $a$ are positive real numbers, with
$a$ strictly less than 1.

We now introduce an important clarifying notation for perturbations
of vectors in a superiorized version of an iterative algorithm. As
an example, if the perturbation is done as indicated in (\ref{eq:superiorization}),
then we use $\mathbf{x}_{k+\nicefrac{1}{2}}$ to denote $\mathbf{x}_{k}+\gamma_{k}\mathbf{v}_{k}$,
which means that (\ref{eq:superiorization}) can be replaced by $\mathbf{x}_{k+1}=\mathbf{P}(\mathbf{x}_{k+\nicefrac{1}{2}})$.
Similar notation will be adopted for other intermediate vectors in
the description of superiorized algorithms. Furthermore, for $k\geq1$,
we will use the notation $\mathbf{x}_{k-\nicefrac{1}{2}}$ to denote
$\mathbf{x}_{\left(k-1\right)+\nicefrac{1}{2}}$. 

In the superiorized algorithms described below we use the general
notation perturbed($\mathbf{x}$) to denote the perturbed version
of the vector $\mathbf{x}$. For example, if the superiorization is
done according to the discussion in the paragraph that contains (\ref{eq:superiorization}),
then
\begin{equation}
\mathrm{peturbed}\left(\mathbf{x}_{k}\right)=\mathbf{x}_{k}+\gamma_{k}\mathbf{v}_{k}.\label{eq:superiorized}
\end{equation}
This, combined with the notation introduced in the previous paragraph,
implies that $\mathbf{x}_{k+\nicefrac{1}{2}}=\mathrm{peturbed}\left(\mathbf{x}_{k}\right)$.\footnote{We note that, as is shown in \cite{Helou2016}, the proximal operator
from (\ref{eq:prox}) may be utilized to provide an alternative definition
of the perturbed vector (\ref{eq:superiorized}).}

\subsection{CG-K and Its Superiorization S-CG-K}

In our first approach to superiorizing CG, we introduce the idea of
the algorithm CG-K (where K is a fixed positive integer), whose one
iterative step is defined as K iterative steps of CG. Details of a
step CG-K$(\mathbf{A},\mathbf{y},\mathbf{x}_{0},\mathbf{x})$ of this
algorithm are provided in Algorithm \ref{CG-K}. The parameters in
the list for this algorithm CG-K$(\mathbf{A},\mathbf{y},\mathbf{x}_{0},\mathbf{x})$
are to be interpreted in the same way as for CG$(\mathbf{A},\mathbf{y},\mathbf{x}_{0},\varepsilon,\mathbf{x})$.
The only difference between CG-K and CG is in Step \ref{CG-K-termination},
which specifies the termination condition.

In each iterative step of the superiorized version of CG-K, we restart
CG using the perturbed output of the previous K iterations of CG as
the input for the next K iterations. Details of this superiorized
version are provided in Algorithm \ref{S-CG-K}. The parameter list
of the superiorized algorithm S-CG-K$(\mathbf{A},\mathbf{y},\mathbf{x}_{0},\varepsilon,\mathbf{x})$
is to be interpreted in the same way as for CG$(\mathbf{A},\mathbf{y},\mathbf{x}_{0},\varepsilon,\mathbf{x})$.
The integer K is not considered a parameter; we think of S-CG-1, S-CG-2,
etc. as different algorithms that call CG-1, CG-2, etc., respectively.

In order to see how S-CG-K$(\mathbf{A},\mathbf{y},\mathbf{x}_{0},\varepsilon,\mathbf{x})$
overcomes the previously mentioned difficulty with directly superiorizing
CG, observe that in order to obtain $\mathbf{x}_{k+1}$ from $\mathbf{x}_{k}$
(Steps \ref{S-CG-K_perturbation} and \ref{S-CG-K_call_CG-K} of Algorithm
\ref{S-CG-K}), we do not need to to worry about the maintenance of
mathematical relations with other vectors. Since CG converges to an
LS solution for any initial vector (in particular, for $\mathbf{x}_{k+\nicefrac{1}{2}}$),
$\mathbf{x}_{k+1}$ will be a good approximation to the LS solution
if K is large enough. We found that, in practice, K can be as low
as 2 or 3 for computed tomography problems. 

The S-CG-K algorithm calls the algorithm CG-K, which in turn calls
the algorithm CG-STEP that is also called in Algorithm \ref{CG} for
CG. This implies that for S-CG-K one can make use of any code or implementation
for standard CG that is available for many computer architectures
\cite{Fraysse2000,Bolz2003}.

\begin{algorithm}[!tbp]
\caption{S-CG-K$(\mathbf{A},\mathbf{y},\mathbf{x}_{0},\varepsilon,\mathbf{x})$}

\label{S-CG-K}

\begin{algorithmic}[1]

\STATE{\textbf{set} $k=0$}

\STATE{\textbf{set }$\mathbf{x}_{\nicefrac{-1}{2}}=\mathbf{x}_{0}$}

\STATE{\textbf{while }$f(\mathbf{x}_{k-\nicefrac{1}{2}})>\varepsilon$
}\label{S-CG-K_stopping}

\STATE{~~~~\textbf{set }$\mathbf{x}_{k+\nicefrac{1}{2}}=\text{perturbed}(\mathbf{x}_{k})$}\label{S-CG-K_perturbation}

\STATE{~~~~\textbf{call} CG-K$(\mathbf{A},\mathbf{y},\mathbf{x}_{k+\nicefrac{1}{2}},\mathbf{x}_{k+1})$}\label{S-CG-K_call_CG-K}

\STATE{~~~~\textbf{set $k=k+1$}} 

\STATE{\textbf{set} $\mathbf{x=x}_{k-\nicefrac{1}{2}}$ } 

\end{algorithmic}
\end{algorithm}

\medskip{}

\begin{algorithm}[!tbp]
\caption{CG-K$(\mathbf{A},\mathbf{y},\mathbf{x}_{0},\mathbf{x})$}

\label{CG-K}

\begin{algorithmic}[1]

\STATE{\textbf{set} $k=0$}

\STATE{\textbf{set }$\mathbf{g}_{0}=\mathbf{A}^{T}(\mathbf{A}\mathbf{x}_{0}-\mathbf{y})$}

\STATE{\textbf{set }$\mathbf{p}_{0}=-\mathbf{g}_{0}$}

\STATE{\textbf{set} $\delta_{0}=\left\Vert \mathbf{g}_{0}\right\Vert _{2}^{2}$}

\STATE{\textbf{while }$k\leq\mathrm{K}$ }\label{CG-K-termination}

\STATE{~~~~\textbf{call} CG-STEP$(\mathbf{A},\mathbf{x}_{k},\mathbf{p}_{k},\mathbf{g}_{k},\delta_{k},\mathbf{x}_{k+1},\mathbf{p}_{k+1},\mathbf{g}_{k+1},\delta_{k+1})$}

\STATE{~~~~\textbf{set $k=k+1$}} 

\STATE{\textbf{set} $\mathbf{x=x}_{k}$ } 

\end{algorithmic}
\end{algorithm}

\subsection{Perturbation Resilient CG for Superiorization}

Note that just repeated applications of the CG-K step (that is, using
Algorithm \ref{S-CG-K} without perturbations, meaning that $\mathbf{x}_{k+\nicefrac{1}{2}}$=$\mathbf{x}_{k}$)
does not reproduce the original CG. In order to have an algorithm
that performs exactly as CG, but is resilient to perturbations, we
need to design some alternative algorithmic steps. For this purpose
we utilized some steps proposed for NLCG \cite{Vogel-2002,Shewchuk-1994}.
Here, we present two variants of such steps\textbf{ }described in
Algorithm \ref{CG-PR-STEP} and \ref{S-CG-CD-STEP}, respectively;
the only important difference between them is the definition of $\beta$.
When combining these with perturbed superiorization steps, we obtain
the superiorized versions of two variants of CG; they are presented
as S-CG and S-CG-CD in Algorithms \ref{S-CG} and \ref{S-CG-CD},
respectively. 

An individual step of the proposed variant of the CG algorithm, as
specified in Algorithm \ref{CG-PR-STEP} (the PR is short for ``perturbation
resilient''), is not as efficient computationally as Algorithm \ref{CG-STEP}.
This is because the trick of replacing (\ref{eq: g_k+1}) by the computationally
more efficient (\ref{eq:recursive}) is mathematically incorrect when
using CG-PR-STEP and so Step \ref{CG-PR-STEP-grad} of Algorithm \ref{CG-PR-STEP},
which is similar to (\ref{eq: g_k+1}), needs to be explicitly (rather
than recursively) computed. However, repeated applications of the
CG-PR-STEP (that is, using Algorithm \ref{S-CG} without perturbations,
meaning that Step \ref{S-CG-perturbation} is replaced by $\mathbf{x}_{k+\nicefrac{1}{2}}$=$\mathbf{x}_{k}$)
results in a perturbation resilient iterative algorithm for solving
the LS problem. Thus its superiorized version (S-CG) should return
an output that is superior according to a secondary criterion (in
our case TV), while preserving performance according to the original
LS-minimization criterion.

\begin{algorithm}[!tbp]
\caption{S-CG$(\mathbf{A},\mathbf{y},\mathbf{x}_{0},\varepsilon,\mathbf{x})$}

\label{S-CG}

\begin{algorithmic}[1]

\STATE{\textbf{set }$\mathbf{\mathbf{\mathbf{x}_{\nicefrac{1}{2}}}}=\mathbf{x}_{0}$}\label{S-CG-k_0}

\STATE{\textbf{set }$\mathbf{g}_{0}=\mathbf{A}^{T}(\mathbf{A}\mathbf{x}_{0}-\mathbf{y})$}\label{initial g}

\STATE{\textbf{set }$\mathbf{p}_{0}=-\mathbf{g}_{0}$}

\STATE{\textbf{set} $\mathbf{h}_{0}=\mathbf{A}^{T}\mathbf{A}\mathbf{p}_{0}$}\label{S-CG-h_0}

\STATE{\textbf{set} $\alpha=-\mathbf{g}_{0}^{T}\mathbf{p}_{0}/\mathbf{p}_{0}^{T}\mathbf{h}_{0}$}\label{initial alpha}

\STATE{\textbf{set }$\mathbf{x}_{1}=\mathbf{x}_{0}+\alpha\mathbf{p}_{0}$}\label{update x_0}

\STATE{\textbf{set $k=1$}} 

\STATE{\textbf{while }$f(\mathbf{x}_{k-\nicefrac{1}{2}})>\varepsilon$
}\label{S-CG-stopping}

\STATE{~~~~\textbf{set }$\mathbf{x}_{k+\nicefrac{1}{2}}=\text{perturbed}(\mathbf{x}_{k})$}\label{S-CG-perturbation}

\STATE{~~~~\textbf{call} CG-PR-STEP$(\mathbf{A},\mathbf{y},\mathbf{x}_{k+\nicefrac{1}{2}},\mathbf{p}_{k-1},\mathbf{h}_{k-1},\mathbf{x}_{k+1},\mathbf{p}_{k},\mathbf{h}_{k})$}\label{S-CG-call-CG-PR-step}

\STATE{~~~~\textbf{set $k=k+1$}} \label{S-CG-update_k}

\STATE{\textbf{set} $\mathbf{x=x}_{k-\nicefrac{1}{2}}$ } 

\end{algorithmic}
\end{algorithm}

\begin{algorithm}[!tbp]
\caption{CG-PR-STEP$(\mathbf{A},\mathbf{y},\mathbf{x},\mathbf{p},\mathbf{h},\mathbf{x}',\mathbf{p}',\mathbf{h}')$}
\label{CG-PR-STEP}

\begin{algorithmic}[1]

\STATE{\textbf{set }$\mathbf{g}'=\mathbf{A}^{T}(\mathbf{A}\mathbf{x}-\mathbf{y})$}\label{CG-PR-STEP-grad}

\STATE{\textbf{set }$\beta=\mathbf{g'}^{T}\mathbf{h}/\mathbf{p}^{T}\mathbf{h}$}\label{CG-PR-STEP-beta}

\STATE{\textbf{set }$\mathbf{p}'=\mathbf{-g}'+\beta\mathbf{p}$}\label{CG-PR-STEP-new_p}

\STATE{\textbf{set} $\mathbf{h}'=\mathbf{A}^{T}\mathbf{A}\mathbf{p}'$}\label{CG-PR-STEP-h}

\STATE{\textbf{set} $\alpha=-\mathbf{g'}^{T}\mathbf{p}'/\mathbf{p'}^{T}\mathbf{h}'$}\label{CG-PR-STEP-alpha}

\STATE{\textbf{set }$\mathbf{x}'=\mathbf{x}+\alpha\mathbf{p'}$}\label{CG-PR-STEP-image_update}

\end{algorithmic}
\end{algorithm}

\begin{algorithm}[!tbp]
\caption{S-CG-CD$(\mathbf{A},\mathbf{y},\mathbf{x}_{0},\varepsilon,\mathbf{x})$}

\label{S-CG-CD}

\begin{algorithmic}[1]

\STATE{\textbf{set }$\mathbf{\mathbf{\mathbf{x}_{\nicefrac{1}{2}}}}=\mathbf{x}_{0}$}

\STATE{\textbf{set }$\mathbf{g}_{0}=\mathbf{A}^{T}(\mathbf{A}\mathbf{x}_{0}-\mathbf{y})$}

\STATE{\textbf{set }$\mathbf{p}_{0}=-\mathbf{g}_{0}$}

\STATE{\textbf{set} $\mathbf{h}_{0}=\mathbf{A}^{T}\mathbf{A}\mathbf{p}_{0}$}

\STATE{\textbf{set} $\alpha=-\mathbf{g}_{0}^{T}\mathbf{p}_{0}/\mathbf{p}_{0}^{T}\mathbf{h}_{0}$}

\STATE{\textbf{set }$\mathbf{x}_{1}=\mathbf{x}_{0}+\alpha\mathbf{p}_{0}$}

\STATE{\textbf{set $k=1$}} 

\STATE{\textbf{while }$f(\mathbf{x}_{k-\nicefrac{1}{2}})>\varepsilon$
}

\STATE{~~~~\textbf{set }$\mathbf{x}_{k+\nicefrac{1}{2}}=\text{perturbed}(\mathbf{x}_{k})$}

\STATE{~~~~\textbf{call} CG-CD-STEP$(\mathbf{A},\mathbf{y},\mathbf{x}_{k+\nicefrac{1}{2}},\mathbf{p}_{k-1},\mathbf{g}_{k-1},\mathbf{x}_{k+1},\mathbf{p}_{k},\mathbf{g}_{k})$}

\STATE{~~~~\textbf{set $k=k+1$}} 

\STATE{\textbf{set} $\mathbf{x=x}_{k-\nicefrac{1}{2}}$ } 

\end{algorithmic}
\end{algorithm}

\begin{algorithm}[!tbp]
\caption{CG-CD-STEP$(\mathbf{A},\mathbf{y},\mathbf{x},\mathbf{p},\mathbf{g},\mathbf{x'},\mathbf{p'},\mathbf{g'})$ }
\label{S-CG-CD-STEP}

\begin{algorithmic}[1]

\STATE{\textbf{set }$\mathbf{g'}=\mathbf{A}^{T}(\mathbf{A}\mathbf{x}-\mathbf{y})$}

\STATE{\textbf{set }$\beta=-\left\Vert \mathbf{g'}\right\Vert _{2}^{2}/\mathbf{g}^{T}\mathbf{p}$}\label{S-CG-CD-STEP_beta}

\STATE{\textbf{set }$\mathbf{p'}=\mathbf{-g'}+\beta\mathbf{p}$}

\STATE{\textbf{set} $\mathbf{h}'=\mathbf{A}^{T}\mathbf{A}\mathbf{p'}$}

\STATE{\textbf{set} $\alpha=-\mathbf{g'}^{T}\mathbf{p'}/\mathbf{p'}^{T}\mathbf{h'}$}

\STATE{\textbf{set }$\mathbf{x'}=\mathbf{x}+\alpha\mathbf{p'}$}

\end{algorithmic}
\end{algorithm}

Furthermore, different definitions of $\beta$ can be utilized \cite{Hager2006,Dai2013}.
One of these, namely $\beta=-||\mathbf{g'}||_{2}^{2}/\mathbf{g}^{T}\mathbf{p}$,
is very effective; we use it in Step \ref{S-CG-CD-STEP_beta} of Algorithm
\ref{S-CG-CD-STEP}. (It has been referred to in the literature as
the ``conjugate descent'' rule, hence the ``CD'' in the name of
the algorithm.) This choice of $\beta$ provides a smooth behavior
of the superiorized CG algorithm and it resulted in the best reconstruction
quality among all our experimentally-tested algorithms.

\subsection{PCG-K and Its Superiorization S-PCG-K}

As is the case with standard CG, the PCG algorithm cannot be directly
superiorized. However, the same modifications that we proposed for
CG can be done also to PCG. The first of these is superiorized PCG-K
(S-PCG-K). The operator for each iteration of S-PCG-K (Algorithm \ref{S-PCG-K})
is specified by PCG-K (Algorithm \ref{PCG-K}), with K being the number
of iterations of preconditioned CG, similarly to what was defined
for CG-K. In practice, a small number K is sufficient for S-PCG-K
to achieve good results. (In fact, a large value of K may be undesirable.
This is because, the output of such PCG-K would be very close to the
minimum of the least squares, defined in (\ref{eq:LS1}), resulting
in early violation of the condition in Step \ref{S-PCG-K_stopping}
of Algorithm \ref{S-PCG-K}. This would prevent the execution of the
number of perturbations by Step \ref{S-PCG-K_perturbation} of Algorithm
\ref{S-PCG-K} that is needed to get to a noticeably superior output
according to the secondary criterion.) The not-superiorized version
of S-PCG-K (that is, using Algorithm \ref{S-PCG-K} without perturbations,
meaning that Step \ref{S-PCG-K_perturbation} is replaced by $\mathbf{x}_{k+\nicefrac{1}{2}}$=$\mathbf{x}_{k}$)
is a nonexpansive algorithm and, consequently, it is strongly perturbation
resilient \cite{Herman2012}.

\subsection{Perturbation Resilient PCG for Superiorization}

Since repeated application of PCG-K does not mimic the PCG steps exactly,
a second variant of preconditioned CG was proposed for superiorization;
this new superiorized algorithm is provided as Algorithm \ref{S-PCG}.
As was done for CG, the standard version of preconditioned CG was
modified so that it becomes perturbation resilient. This way the algorithm
will eventually converge to the LS solution, considering that bounded
and summable perturbations are applied \cite{Herman2012}. This algorithm
is named S-PCG.

\begin{algorithm}[!tbp]
\caption{S-PCG-K$(\mathbf{A},\mathbf{y},\mathbf{x}_{0},\varepsilon,\mathbf{x},\mathbf{M})$}

\label{S-PCG-K}

\begin{algorithmic}[1]

\STATE{\textbf{set} $k=0$}

\STATE{\textbf{set }$\mathbf{x}_{\nicefrac{-1}{2}}=\mathbf{x}_{0}$}

\STATE{\textbf{while }$f(\mathbf{x}_{k\nicefrac{-1}{2}})>\varepsilon$
}\label{S-PCG-K_stopping}

\STATE{~~~~\textbf{set }$\mathbf{x}_{k+\nicefrac{1}{2}}=\text{perturbed}(\mathbf{x}_{k})$}\label{S-PCG-K_perturbation}

\STATE{~~~~\textbf{call} PCG-K$(\mathbf{A},\mathbf{y},\mathbf{x}_{k+\nicefrac{1}{2}},\mathbf{x}_{k+1},\mathbf{M})$}

\STATE{~~~~\textbf{set $k=k+1$}} 

\STATE{\textbf{set} $\mathbf{x=x}_{k\nicefrac{-1}{2}}$ } 

\end{algorithmic}
\end{algorithm}

\begin{algorithm}[!tbp]
\caption{PCG-K$(\mathbf{A},\mathbf{y},\mathbf{x}_{0},\mathbf{x},\mathbf{M})$}

\label{PCG-K}

\begin{algorithmic}[1]

\STATE{\textbf{set} $k=0$}

\STATE{\textbf{set }$\mathbf{g}_{0}=\mathbf{A}^{T}(\mathbf{A}\mathbf{x}_{0}-\mathbf{y})$}

\STATE{\textbf{set }$\mathbf{z}_{0}=\mathbf{M}\mathbf{g}_{0}$}\label{PCG-K-preconditioning}

\STATE{\textbf{set }$\mathbf{p}_{0}=-\mathbf{z}_{0}$}

\STATE{\textbf{set }$\delta_{0}=\mathbf{g}_{0}^{T}\mathbf{z}_{0}$}

\STATE{\textbf{while }$k\leq\mathrm{K}$ }

\STATE{~~~~\textbf{call} PCG-STEP$(\mathbf{A},\mathbf{x}_{k},\mathbf{p}_{k},\mathbf{g}_{k},\delta_{k},\mathbf{x}_{k+1},\mathbf{p}_{k+1},\mathbf{g}_{k+1},\delta_{k+1},\mathbf{M})$}

\STATE{~~~~\textbf{set $k=k+1$}} 

\STATE{\textbf{set} $\mathbf{x=x}_{k}$ } 

\end{algorithmic}
\end{algorithm}

\begin{algorithm}[!tbp]
\caption{S-PCG$(\mathbf{A},\mathbf{y},\mathbf{x}_{0},\varepsilon,\mathbf{x},\mathbf{M})$}

\label{S-PCG}

\begin{algorithmic}[1]

\STATE{\textbf{set }$\mathbf{\mathbf{\mathbf{x}_{\nicefrac{1}{2}}}}=\mathbf{x}_{0}$}

\STATE{\textbf{set }$\mathbf{g}_{0}=\mathbf{A}^{T}(\mathbf{A}\mathbf{x}_{0}-\mathbf{y})$}

\STATE{\textbf{set }$\mathbf{z}_{0}=\mathbf{M}\mathbf{g}_{0}$}\label{S-PCG_preconditioning}

\STATE{\textbf{set }$\mathbf{p}_{0}=-\mathbf{z}_{0}$}

\STATE{\textbf{set} $\mathbf{h}_{0}=\mathbf{A}^{T}\mathbf{A}\mathbf{p}_{0}$}

\STATE{\textbf{set} $\alpha=-\mathbf{g}_{0}^{T}\mathbf{p}_{0}/\mathbf{p}_{0}^{T}\mathbf{h}_{0}$}

\STATE{\textbf{set }$\mathbf{x}_{1}=\mathbf{x}_{0}+\alpha\mathbf{p}_{0}$}

\STATE{\textbf{set $k=1$}} 

\STATE{\textbf{while }$f(\mathbf{x}_{k-\nicefrac{1}{2}})>\varepsilon$
}

\STATE{~~~~\textbf{set }$\mathbf{x}_{k+\nicefrac{1}{2}}=\text{perturbed}(\mathbf{x}_{k})$}

\STATE{~~~~\textbf{call} PCG-PR-STEP$(\mathbf{A},\mathbf{y},\mathbf{x}_{k+\nicefrac{1}{2}},\mathbf{p}_{k-1},\mathbf{h}_{k-1},\mathbf{x}_{k+1},\mathbf{p}_{k},\mathbf{h}_{k},\mathbf{M})$}

\STATE{~~~~\textbf{set $k=k+1$}} 

\STATE{\textbf{set} $\mathbf{x=x}_{k-\nicefrac{1}{2}}$ } 

\end{algorithmic}
\end{algorithm}

\begin{algorithm}[!tbp]
\caption{PCG-PR-STEP$(\mathbf{A},\mathbf{y},\mathbf{x},\mathbf{p},\mathbf{h},\mathbf{x'},\mathbf{p'},\mathbf{h'},\mathbf{M})$ }
\label{S-PCG-PR-STEP}

\begin{algorithmic}[1]

\STATE{\textbf{set }$\mathbf{g'}=\mathbf{A}^{T}(\mathbf{A}\mathbf{x}-\mathbf{y})$}

\STATE{\textbf{set }$\mathbf{z'}=\mathbf{M}\mathbf{g'}$}\label{S-PCG-PR-STEP_preconditioning}

\STATE{\textbf{set }$\beta=\mathbf{z'}^{T}\mathbf{h}/\mathbf{p}^{T}\mathbf{h}$}

\STATE{\textbf{set }$\mathbf{p'}=\mathbf{-z'}+\beta\mathbf{p}$}

\STATE{\textbf{set} $\mathbf{h'}=\mathbf{A}^{T}\mathbf{A}\mathbf{p'}$}

\STATE{\textbf{set} $\alpha=-\mathbf{g'}^{T}\mathbf{p'}/\mathbf{p'}^{T}\mathbf{h'}$}

\STATE{\textbf{set }$\mathbf{x'}=\mathbf{x}+\alpha\mathbf{p'}$}

\end{algorithmic}
\end{algorithm}

\subsection{Filters for Preconditioning }

In order to specify more precisely the details of S-PCG and S-PCG-K,
we complete this section with a discussion of the preconditioning
filter $\mathbf{M}$ used in Step \ref{PCG-K-preconditioning} of
Algorithm \ref{PCG-K}, Step \ref{S-PCG_preconditioning} of Algorithm
\ref{S-PCG} and Step \ref{S-PCG-PR-STEP_preconditioning} of Algorithm
\ref{S-PCG-PR-STEP}. It is a filter applied in the frequency domain,
in the same way as many filtering operations utilized in the filtered
back projection method \cite[Chapter 8]{Herman2009}. It can be defined
as

\begin{equation}
\begin{array}{c}
\mathbf{z}=\mathbf{A}^{T}\mathbf{F}^{T}\mathbf{C}\mathbf{F}(\mathbf{A}\mathbf{x}-\mathbf{y}),\end{array}
\end{equation}
where $\mathbf{F}$ represents a set of 1D Fourier transforms applied
to each projection and $\mathbf{C}$ is a diagonal matrix with the
filter factors in the frequency domain for each projection information
(in Fourier domain). The filter factors can be written as

\begin{equation}
c(\omega)=(|\omega|+\mu)\times(0.54+0.46\,cos(\omega)),\label{eq:filter_factors_pre}
\end{equation}
where $\omega$ is the frequency, $\mu$ is small number to avoid
any singularity in the preconditioning matrix, and $(0.54+0.46\,cos(\omega))$
is the Hamming window, to reduce the gain in high frequencies that
may amplify noise too much. The Hamming window is commonly used to
``regularize'' the filter factors in filtered backprojection \cite{Herman2009}.

\section{\label{sec:Experiments}Experimental Results}

In these experiments we utilized a parallel ray tomographic simulation
with 256 angles, uniformly spaced between 0 and 180 degrees, and 512
rays/angle. Gaussian noise with zero mean and $\sigma^{2}$ variance
was added to the synthetically generated data, where $\sigma^{2}$
was computed to provide 5\% of noise, or SNR of approximately 26dB.
We reconstruct the images with 512$\times$512 pixels, using the following
methods:
\begin{itemize}
\item CG: Algorithm \ref{CG}, the conjugate gradient method for solving
problem (\ref{eq:LS1});
\item S-CG-2: Algorithm \ref{S-CG-K} with $K$=2, the superiorized CG algorithm
using the operator CG-2 of Algorithm \ref{CG-K};
\item S-CG: Algorithm \ref{S-CG}, the superiorized CG algorithm using the
perturbation resilient CG operator of Algorithm \ref{CG-PR-STEP};
\item S-CG-CD: Algorithm \ref{S-CG-CD}, variant of S-CG using the CG-CD
operator of Algorithm \ref{S-CG-CD-STEP};
\item S-PCG-2: Algorithm \ref{S-PCG-K} with $K$=2, the superiorized preconditioned
CG algorithm using the operator PCG-2 of Algorithm \ref{PCG-K};
\item S-PCG: Algorithm \ref{S-PCG}; the superiorized preconditioned CG
algorithm using the operator PCG-PR-STEP of Algorithm \ref{S-PCG-PR-STEP};
\item FISTA: a reference method for the TV-regularized LS problem, specified
by (\ref{eq:FISTA}).
\end{itemize}
For all the algorithms, we selected the initial guess at the solution,
$\mathbf{x}_{0}$, to be the $J$-dimensional vector all of whose
components are zero. The $\varepsilon$ value for the stopping criterion
(as in Step \ref{sc} of Algorithm \ref{CG} and Step \ref{S-CG-K_stopping}
of Algorithm \ref{S-CG-K}, etc.) is defined as $\varepsilon=N\sigma^{2}$
where $\sigma^{2}$ is the noise variance and $N=256\times512$ is
the size of the data vector. For all superiorized algorithms, the
constant $a$, in (\ref{eq:gamma_sequence}), was set to .975 and
$\gamma_{0}$ was manually selected to achieve the best performance
(it is different for each method). Algorithms for all these methods
(except for FISTA) are specified in the previous two sections and,
in each case, the output $\mathbf{x}$ of the algorithm is provided
by its last step. The following paragraphs explain the correct interpretation
of the results reported in Figures \ref{fig:ERROR}, \ref{fig:COST}
and \ref{fig:TV_NORM}. 

In Figure \ref{fig:ERROR} we compare algorithm performance in terms
of quality and reconstruction speed, as judged by reconstruction error
versus time. Each open circle on the curves corresponds to an iteration.
The red boxes indicate the iteration when the stopping criterion is
first satisfied. The output images are shown in Figure \ref{fig:images},
in which the $k$ used to get the output is also indicated. For illustration
purposes, we show error values even beyond the stopping criterion.
We see in Figure \ref{fig:ERROR} that the algorithm S-CG-CD has the
lowest error among all reported algorithms and that S-PCG-2 is the
superiorized algorithm that satisfied the stopping criterion in the
least amount of time.

\begin{figure}[!tbp]
\includegraphics[scale=0.54]{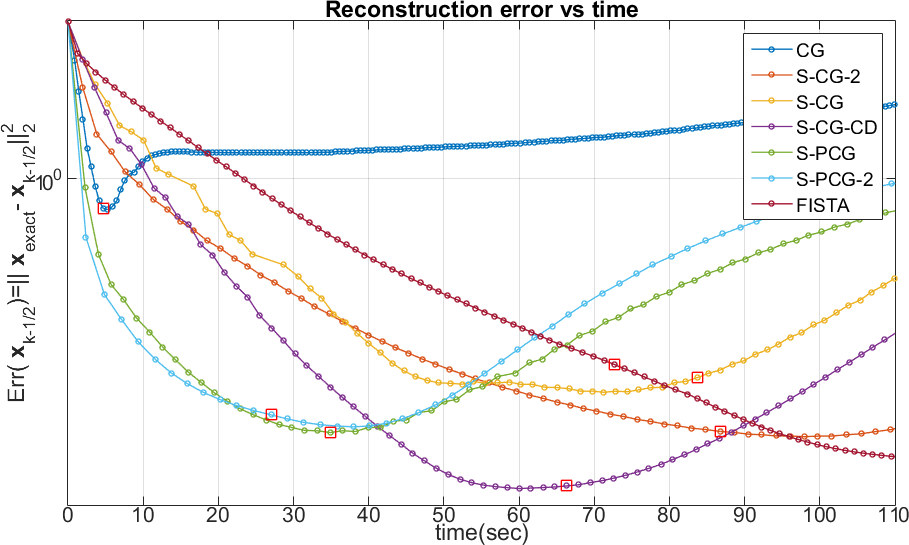}\caption{Reconstruction error versus time}
\label{fig:ERROR}
\end{figure}

Figure \ref{fig:COST} reports on the half-squared-residual $f$ of
the reconstructed images over time. Again we report beyond the iteration
provided by the stopping criterion, with the purpose to demonstrate
the behavior of superiorization algorithms, which converge to the
(not regularized) least squares solution. On the other hand, regularized
algorithms, such as FISTA, stay nearly at the same value of $f$ as
the iterations go beyond the stopping criterion.

\begin{figure}[!tbp]
\includegraphics[scale=0.54]{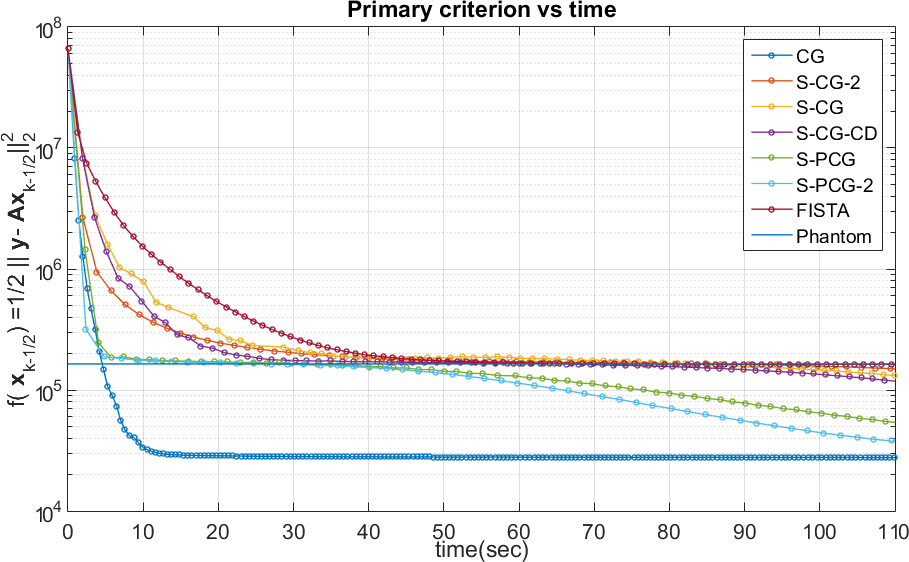}\caption{Least squares residual versus time}
\label{fig:COST}
\end{figure}

In Figure \ref{fig:TV_NORM}, we show how the smoothness, as measured
by the TV norm of the reconstructed images, behaves over time. Starting
with a zero-valued image $\mathbf{x}_{0}$ (whose TV norm is 0), we
see that the reconstructed images rapidly reach having a TV norm very
close to the TV norm of the phantom. After a while the TV norm of
the superiorization algorithms grows again, due to the reduction of
the $\gamma_{k}$ values in the perturbations (\ref{eq:superiorized});
but this is not relevant as far as the output is concerned, since
it happens after the stopping criterion is satisfied.

\begin{figure}[!tbp]
\includegraphics[scale=0.54]{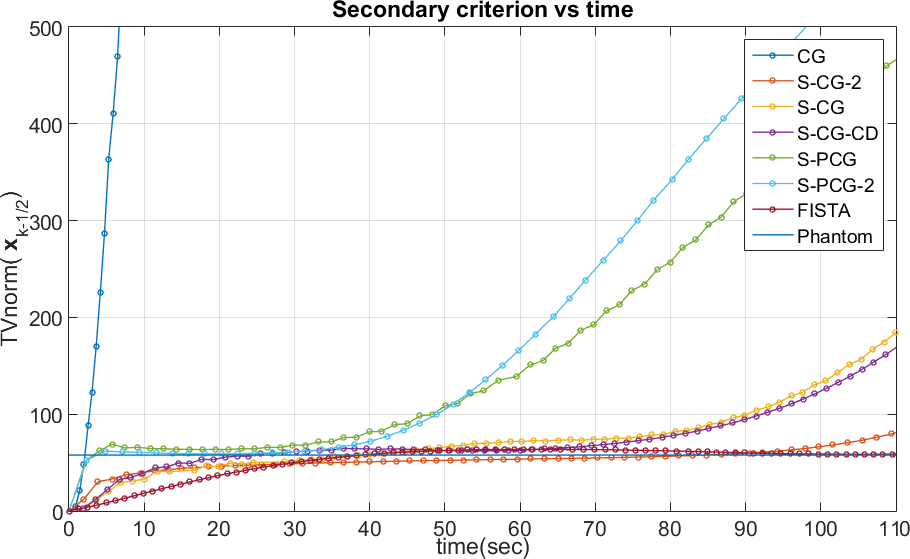}

\caption{TV norm versus time}
\label{fig:TV_NORM}
\end{figure}

Figure 4 presents the output reconstructions produced by the tested
methods, indicating the number of iterations required to reach those
results. As a reference, a reconstruction produced by a filtered backprojection
method \cite[Chapter 8]{Herman2009} is also shown. These results
can be used for subjective visual evaluation.

\begin{figure}[!tbp]
\begin{minipage}[t]{0.34\columnwidth}%
\subfloat[Filtered backprojection]{\includegraphics[scale=0.23]{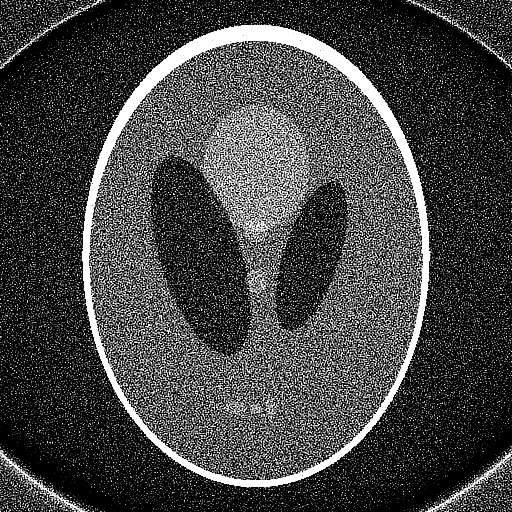}

}%
\end{minipage}%
\begin{minipage}[t]{0.34\columnwidth}%
\subfloat[CG, iteration 9]{\includegraphics[scale=0.23]{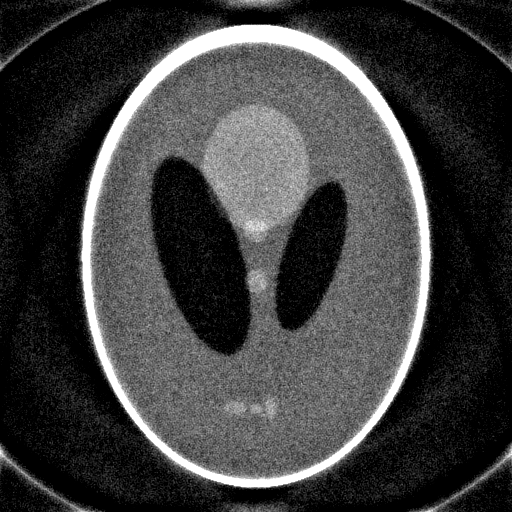}

}%
\end{minipage}%
\begin{minipage}[t]{0.34\columnwidth}%
\subfloat[S-CG-2, iteration 50]{\includegraphics[scale=0.23]{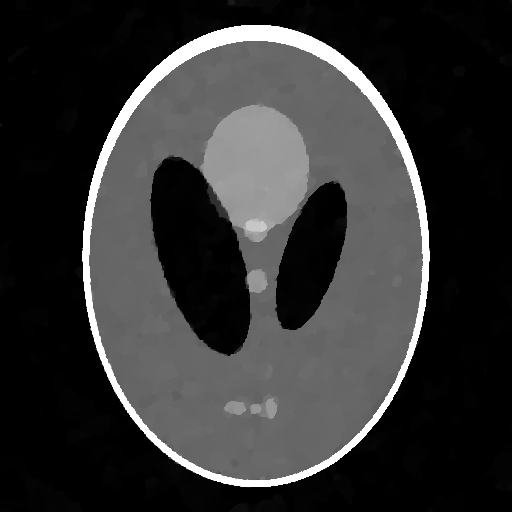}

}%
\end{minipage}

\begin{minipage}[t]{0.34\columnwidth}%
\subfloat[S-CG, iteration 51]{\includegraphics[scale=0.23]{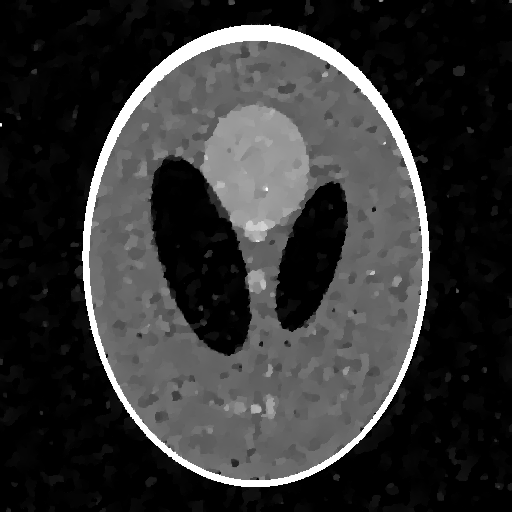}

}%
\end{minipage}%
\begin{minipage}[t]{0.34\columnwidth}%
\subfloat[S-CG-CD, iteration 43]{\includegraphics[scale=0.23]{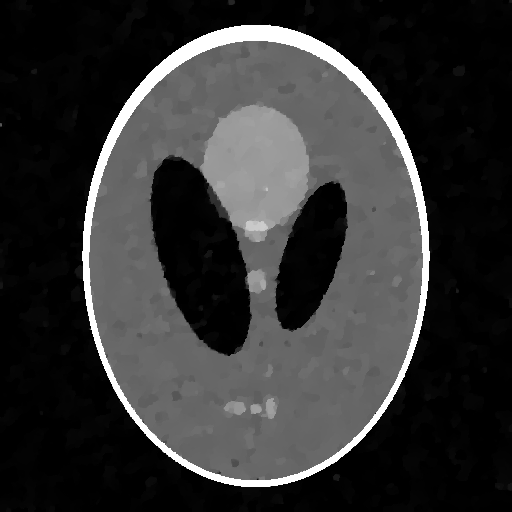}

}%
\end{minipage}%
\begin{minipage}[t]{0.34\columnwidth}%
\subfloat[S-PCG, iteration 20]{\includegraphics[scale=0.23]{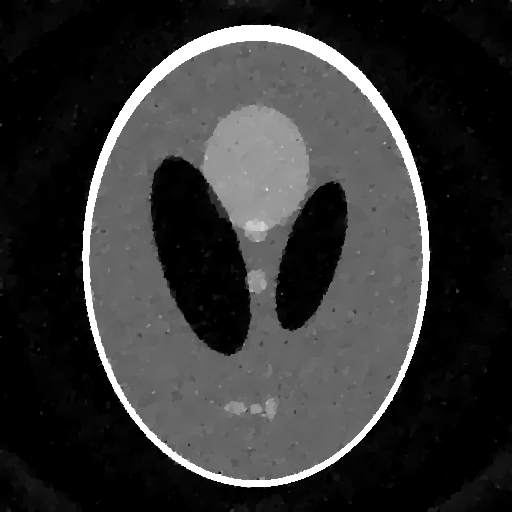}

}%
\end{minipage}

\begin{minipage}[t]{0.34\columnwidth}%
\subfloat[S-PCG-2, iteration 13]{\includegraphics[scale=0.23]{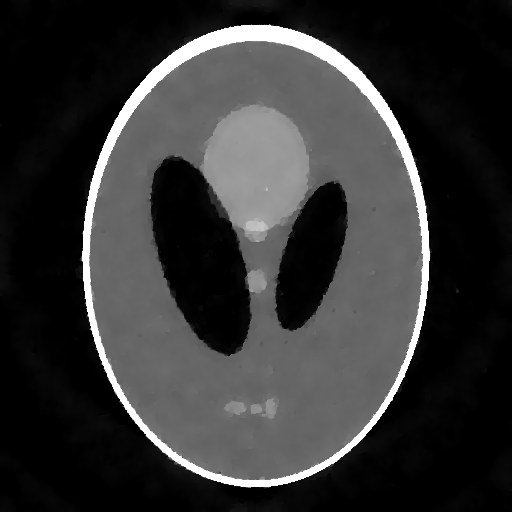}

}%
\end{minipage}%
\begin{minipage}[t]{0.34\columnwidth}%
\subfloat[FISTA, iteration 61]{\includegraphics[scale=0.23]{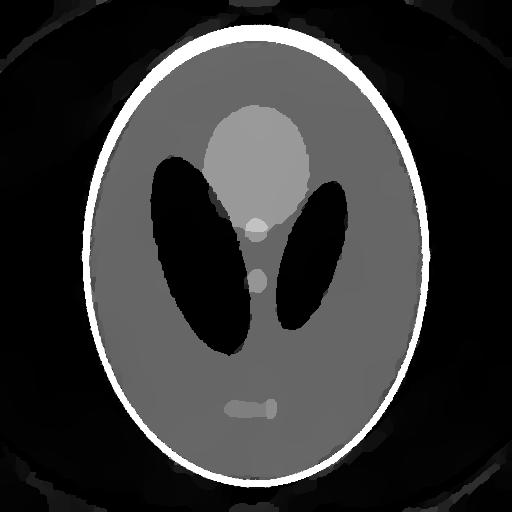}

}%
\end{minipage}%
\begin{minipage}[t]{0.34\columnwidth}%
\subfloat[Phantom]{\includegraphics[scale=0.23]{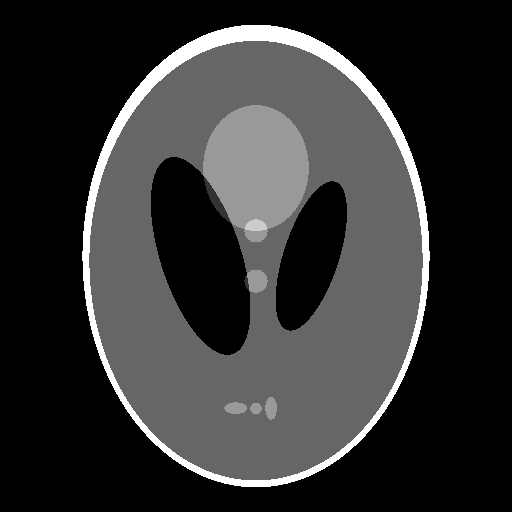}}%
\end{minipage}\caption{Visual results}
\label{fig:images}
\end{figure}

\section{Discussion and Conclusions\label{sec:Conclusions}}

In this paper we have applied the superiorization approach to the
conjugate gradient method, as well as to its preconditioned version.
The total variation norm, commonly utilized as a regularizing penalty,
was utilized as the secondary criterion for superiorization. In the
Appendix we prove for one of the superiorized versions of the conjugate
gradient method (namely, S-CG) that it behaves in the manner that
is expected for a superiorized algorithm (roughly meaning that it
gets as good results as the unsuperiorized version according to the
primary criterion, but with improvements according to the secondary
criterion; see, for example, \cite{Herman2012} for a more precise
discussion). The experimental results have illustrated that the proposed
algorithms are fast, achieving as good results as algorithms such
as FISTA, but in less time; see the plots for S-CG-CD, S-PCG and S-PCG-K
in Figure \ref{fig:ERROR}. The proposed method is a practical solution
for the image reconstruction problem when good reconstruction quality
and low reconstruction time are both important requirements for the
desired reconstruction algorithm.

\section*{Acknowledgments{\normalsize{}\label{sec:Acknowledgments}}}

The work by the first author was partially supported by CNPq grant
475553/2013-6. The work by the second author was supported in part
by China Scholarship Council. The authors thank Elias Salomão Helou
Neto for discussions on earlier versions of this paper.

\section*{Appendix: Proof that Superiorized CG (Algorithm \ref{S-CG}) Terminates\label{sec:Appendix:-convergence-proof}}

The specifications of the parameters of Algorithm \ref{S-CG} are: 
\begin{itemize}
\item $J$ is a positive integer (the dimensionality of the output vector
of the algorithm).
\item $L$ is a positive integer (the number of data elements in the sinogram).
\item $\mathbf{A}$ is an $L\times J$ (system) matrix. 
\item $\mathbf{y}$ is an $L$-dimensional (measurement) vector.
\item $\varepsilon$ is a positive number (used for algorithm termination).
\item $\mathbf{x}_{0}$ is a $J$-dimensional (input) vector (the initial
guess vector).
\item $\mathbf{x}$ is a $J$-dimensional (output) vector.
\end{itemize}
The quadratic objective function for the data fidelity constraints
(the primary criterion) is the half-squared-residual

\begin{eqnarray}
f(\mathbf{x}) & = & \frac{1}{2}\left\Vert |\mathbf{y-Ax}\right\Vert _{2}^{2}.\label{eq:fuai_x}
\end{eqnarray}
We define 
\begin{equation}
\varepsilon_{0}=\min_{\mathbf{x}}\frac{1}{2}||\mathbf{y-Ax}||_{2}^{2}.\label{eq:epsilon_0}
\end{equation}
This implies that $\varepsilon_{0}=f\left(\mathbf{x}_{LS}\right)$;
see (\ref{eq:LS1}). We note that termination of Algorithm \ref{S-CG}
can happen only if the condition in Step \ref{S-CG-stopping} of the
algorithm is violated. That means that $f\left(\mathbf{x}_{k-\nicefrac{1}{2}}\right)\leq\varepsilon$
and also that the output $\mathbf{x}$ of the algorithm satisfies
$f\left(\mathbf{x}\right)\leq\varepsilon$.
\begin{thm}
\label{Thm: Conv-1}Given any positive number $\varepsilon$ such
that $\varepsilon>\varepsilon_{0}$, Algorithm \ref{S-CG} terminates.
\end{thm}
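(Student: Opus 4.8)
The plan is to show that Algorithm \ref{S-CG} cannot run forever, by exhibiting a definite amount of progress toward $\varepsilon$ in each pass through the \textbf{while} loop (or, failing that, in a bounded number of passes). First I would observe that the unsuperiorized core of S-CG --- obtained by setting $\mathbf{x}_{k+\nicefrac12}=\mathbf{x}_k$ --- is a legitimate (linear) conjugate gradient iteration for the least-squares problem \eqref{eq:LS1}, so that a single call to CG-PR-STEP, starting from \emph{any} vector $\mathbf{w}$ with $f(\mathbf{w})>\varepsilon_0$, produces a vector $\mathbf{w}'$ with $f(\mathbf{w}')\le f(\mathbf{w})$, and in fact with strict decrease bounded below in a way that depends only on how far $f(\mathbf{w})$ is above $\varepsilon_0$. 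The key quantitative fact is the standard CG/steepest-descent type estimate: the exact line search in Step \ref{CG-PR-STEP-alpha}--\ref{CG-PR-STEP-image_update} guarantees $f(\mathbf{x}_{k+1})\le f(\mathbf{x}_{k+\nicefrac12}) - c\bigl(f(\mathbf{x}_{k+\nicefrac12})-\varepsilon_0\bigr)$ for a constant $c\in(0,1]$ governed by the spectrum of $\mathbf{A}^T\mathbf{A}$ on the relevant subspace (one can get this crudely from the Kantorovich inequality applied to the gradient step, since $-\mathbf{g}$ is always an admissible direction and the PR line search is optimal along $\mathbf{p}'$).

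Next I would control the perturbation. By the defining property of superiorization recalled around \eqref{eq:superiorization}, the perturbed vector satisfies $R(\mathbf{x}_{k+\nicefrac12})\le R(\mathbf{x}_k)$, but more importantly the step sizes $\gamma_k$ are drawn from the geometric sequence \eqref{eq:gamma_sequence} with ratio $a<1$, hence $\gamma_k\to 0$ and $\sum_k\gamma_k<\infty$. Since the nonascending directions $\mathbf{v}_k$ are bounded (for TV one normalizes, or they are bounded by construction), the displacement $\|\mathbf{x}_{k+\nicefrac12}-\mathbf{x}_k\|\le \gamma_k\|\mathbf{v}_k\|$ is summable and in particular tends to $0$. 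Because $f$ is continuous (indeed quadratic, hence locally Lipschitz on bounded sets), $|f(\mathbf{x}_{k+\nicefrac12})-f(\mathbf{x}_k)|$ can be made as small as we like once $k$ is large; I would make this precise by first arguing the iterates stay in a bounded set (the $f$-values are bounded above by $f(\mathbf{x}_0)$ up to the summable total perturbation effect on $f$, and sublevel sets of $f$ are bounded once one restricts to the affine subspace spanned by the CG directions, or one simply notes $f(\mathbf{x}_k)$ is eventually monotone-ish and bounded).

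Combining the two, each loop iteration yields $f(\mathbf{x}_{k+1})\le f(\mathbf{x}_k) - c\bigl(f(\mathbf{x}_k)-\varepsilon_0\bigr) + \eta_k$, where $\eta_k\ge 0$ is summable (it absorbs the change in $f$ caused by the $k$-th perturbation). Writing $e_k=f(\mathbf{x}_k)-\varepsilon_0$ this reads $e_{k+1}\le (1-c)e_k+\eta_k$. A standard argument for such recursions (e.g.\ the discrete analogue of Gronwall, or: iterate and use $\sum\eta_k<\infty$ together with $1-c<1$) shows $e_k\to 0$, i.e.\ $f(\mathbf{x}_k)\to\varepsilon_0<\varepsilon$. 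Hence there is some $k$ with $f(\mathbf{x}_{k+\nicefrac12})\le\varepsilon$ --- using once more that the half-step perturbation changes $f$ by an amount tending to $0$ --- at which point the \textbf{while} test in Step \ref{S-CG-stopping} fails at the next check and the algorithm terminates.

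The main obstacle I anticipate is making the per-step decrease constant $c$ honest: CG-PR-STEP does \emph{not} do a gradient step, it builds a conjugate direction $\mathbf{p}'=-\mathbf{g}'+\beta\mathbf{p}$, and after a perturbation the cumulative conjugacy relations that underpin the usual finite-termination/convergence theory of CG are destroyed, so one cannot invoke the clean CG convergence rate. The fix is to not ask for the fast CG rate at all: the exact one-dimensional minimization along $\mathbf{p}'$ gives $f(\mathbf{x}')\le\min_{t}f(\mathbf{x}-t\mathbf{g}'\!\!\;/\|\mathbf{g}'\|\cdot 0 \ldots)$ --- more carefully, since the line search is exact, $f(\mathbf{x}')\le f(\mathbf{x})$ automatically, and to get \emph{strict} quantitative descent one compares with the steepest-descent value: $f$ evaluated at the optimal point along $\mathbf{p}'$ is $\le$ $f$ evaluated at the optimal point along any other direction through $\mathbf{x}$ only if $\mathbf{p}'$ is a descent direction, which must be checked (it is, because $\mathbf{g}'^T\mathbf{p}'=-\|\mathbf{g}'\|^2+\beta\,\mathbf{g}'^T\mathbf{p}$ and the PR choice of $\beta$ in Step \ref{CG-PR-STEP-beta} is exactly the one making $\mathbf{p}'$ conjugate to the previous $\mathbf{p}$, from which $\mathbf{g}'^T\mathbf{p}'=-\|\mathbf{g}'\|^2<0$ when $\mathbf{g}'\neq 0$). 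Once $\mathbf{p}'$ is known to be a descent direction with $\mathbf{g}'^T\mathbf{p}'=-\|\mathbf{g}'\|^2$, the exact line search gives at least the steepest-descent decrease, and Kantorovich supplies the constant $c$ in terms of $\lambda_{\max}(\mathbf{A}^T\mathbf{A})/\lambda_{\min}$ on the column space of $\mathbf{A}$; if $\mathbf{A}^T\mathbf{A}$ is singular one restricts everything to $(\ker\mathbf{A})^{\perp}$, on which $f$ behaves like a nondegenerate quadratic and $\varepsilon_0$ is attained. This is the step that needs care; the recursion argument and the perturbation-smallness argument are routine.
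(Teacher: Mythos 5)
Your overall architecture (a quantitative per-pass decrease of $f$ plus smallness of the perturbations forcing $f$ below $\varepsilon$) matches the paper's, but the step on which everything hinges contains a genuine error. You claim that the choice of $\beta$ in Step \ref{CG-PR-STEP-beta} yields $\mathbf{g}'^{T}\mathbf{p}'=-\left\Vert \mathbf{g}'\right\Vert _{2}^{2}$ exactly, ``because $\mathbf{p}'$ is conjugate to $\mathbf{p}$.'' That identity requires $\mathbf{g}'^{T}\mathbf{p}=0$, which in unperturbed CG follows from the exact line search along $\mathbf{p}$ that produced the point at which $\mathbf{g}'$ is evaluated; conjugacy of $\mathbf{p}'$ with $\mathbf{p}$ says nothing about it. Here $\mathbf{g}'=\mathbf{g}_{k+\nicefrac{1}{2}}$ is the gradient at the \emph{perturbed} point $\mathbf{x}_{k+\nicefrac{1}{2}}=\mathbf{x}_{k}+\mathbf{u}_{k}$, and
\[
\left\langle \mathbf{g}_{k+\nicefrac{1}{2}},\mathbf{p}_{k-1}\right\rangle =\left\langle \mathbf{g}_{k},\mathbf{p}_{k-1}\right\rangle +\left\langle \mathbf{A}\mathbf{u}_{k},\mathbf{A}\mathbf{p}_{k-1}\right\rangle =\left\langle \mathbf{A}\mathbf{u}_{k},\mathbf{A}\mathbf{p}_{k-1}\right\rangle ,
\]
which is not zero in general. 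If this cross term is comparable to $\left\Vert \mathbf{g}'\right\Vert _{2}^{2}$, the direction $\mathbf{p}'$ can be nearly orthogonal to $\mathbf{g}'$, the exact line search then yields an arbitrarily small decrease, and your per-pass constant $c$ is unjustified. The paper's Proposition \ref{prop :CG step} exists precisely to handle this: it bounds $\left\langle \mathbf{g}_{k+\nicefrac{1}{2}},\beta\mathbf{p}_{k-1}\right\rangle \leq2c^{2}\eta_{0}\left\Vert \mathbf{g}_{k+\nicefrac{1}{2}}\right\Vert _{2}^{2}$ \emph{under the hypothesis} $\left\Vert \mathbf{u}_{k}\right\Vert _{2}\leq\frac{1}{4c^{2}}\left\Vert \mathbf{g}_{k}\right\Vert _{2}$, i.e., the perturbation must be small \emph{relative to the current gradient norm}, not merely summable. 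The theorem then secures this relative bound by showing $\left\Vert \mathbf{g}(\mathbf{x})\right\Vert _{2}\geq\theta>0$ on $\left\{ \mathbf{x}\,|\,f(\mathbf{x})>\varepsilon\right\} $ (this is exactly where $\varepsilon>\varepsilon_{0}$ is used) and waiting until $\left\Vert \mathbf{u}_{k}\right\Vert _{2}\leq\eta_{l}\theta$. Your argument treats the perturbation only as an additive summable error on $f$-values and never establishes this relative smallness, so the descent property of $\mathbf{p}_{k}$ is not secured.

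A second, lesser divergence: you aim for a linear rate $e_{k+1}\leq(1-c)e_{k}+\eta_{k}$ via a Kantorovich comparison with steepest descent. This needs both a lower spectral bound for $\mathbf{A}^{T}\mathbf{A}$ on $(\ker\mathbf{A})^{\perp}$ and the claim that exact line search along $\mathbf{p}'$ does at least as well as along $-\mathbf{g}'$; the latter is false for arbitrary descent directions and, for $\mathbf{p}'$, again requires the quantitative control of $\mathbf{g}'^{T}\mathbf{p}'$ discussed above (the paper gets $\left|\left\langle \mathbf{g}_{k+\nicefrac{1}{2}},\mathbf{p}_{k}\right\rangle \right|\geq\frac{1}{2}\left\Vert \mathbf{g}_{k+\nicefrac{1}{2}}\right\Vert _{2}^{2}$ together with $\left\Vert \mathbf{A}\mathbf{p}_{k}\right\Vert _{2}\leq\left\Vert \mathbf{A}\mathbf{g}_{k+\nicefrac{1}{2}}\right\Vert _{2}$ from Proposition \ref{prop: otrho conj}). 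The paper avoids the rate question entirely: it proves only $2f(\mathbf{x}_{k})-2f(\mathbf{x}_{k+1})\geq\frac{1}{32c^{2}}\left\Vert \mathbf{g}_{k}\right\Vert _{2}^{2}\geq\frac{\theta^{2}}{32c^{2}}$ on the set where the stopping test fails, and concludes by contradiction, since a fixed positive decrease per iteration is incompatible with $f$ being bounded below. If you repair the descent-direction estimate, you should adopt this gradient-norm-squared form of the decrease, which needs no lower spectral bound and no Kantorovich argument.
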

We prove this theorem through a sequence of intermediate results.
We note the fact that if Step \ref{S-CG-call-CG-PR-step} of Algorithm
\ref{S-CG} is executed, it has to be the case that at that time $k\geq1$.
Also, we consider what happens during the execution of Step \ref{S-CG-call-CG-PR-step}
of Algorithm \ref{S-CG}, which is a call to Algorithm \ref{CG-PR-STEP}:
After executing Step \ref{CG-PR-STEP-grad}, all occurrences of $\mathbf{g'}$
in Algorithm \ref{CG-PR-STEP} may also be denoted as $\mathbf{g}_{k+\nicefrac{1}{2}}$,
where
\begin{equation}
\mathbf{g}_{k+\nicefrac{1}{2}}=\mathbf{A}^{T}\left(\mathbf{A}\mathbf{x}_{k+\nicefrac{1}{2}}-\mathbf{y}\right),\label{g_z,k}
\end{equation}
which is consistent with (\ref{eq: g_k+1}) and is notation that we
use from now on.
\begin{prop}
\label{prop: ortho gp}Upon the execution of Step \ref{S-CG-call-CG-PR-step}
of Algorithm \ref{S-CG}, it is the case that, for all integers $n$
such that $0\leq n\leq k$,

\begin{equation}
\left\langle \mathbf{g}_{n+1},\mathbf{p}_{n}\right\rangle =0,\label{eq: <g, p=003009=00003D0}
\end{equation}
where, $\mathbf{g}_{n+1}$ is defined by (\ref{eq: g_k+1}) with the
$k$ there replaced by $n$.\end{prop}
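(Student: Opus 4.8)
The plan is to prove this by induction on the value $k$ of the loop counter at the moment Step \ref{S-CG-call-CG-PR-step} of Algorithm \ref{S-CG} is executed. The only substantive ingredient is that $f$ is quadratic, so that whenever a vector is updated by $\mathbf{x'} = \mathbf{x} + \alpha\mathbf{p'}$ and one sets $\mathbf{h'} = \mathbf{A}^{T}\mathbf{A}\mathbf{p'}$, the associated gradient changes affinely: $\mathbf{A}^{T}(\mathbf{A}\mathbf{x'}-\mathbf{y}) = \mathbf{A}^{T}(\mathbf{A}\mathbf{x}-\mathbf{y}) + \alpha\mathbf{h'}$. I will combine this with the particular step size selected in Step \ref{CG-PR-STEP-alpha} of Algorithm \ref{CG-PR-STEP}, namely $\alpha = -\mathbf{g'}^{T}\mathbf{p'}/\mathbf{p'}^{T}\mathbf{h'}$, which is engineered precisely to make the updated gradient orthogonal to $\mathbf{p'}$.

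First I would settle the ``top'' index $n = k$ once and for all. For every $k \ge 1$, immediately after Step \ref{S-CG-call-CG-PR-step} has been executed with counter value $k$, the quantities produced inside Algorithm \ref{CG-PR-STEP} are $\mathbf{h}_{k} = \mathbf{A}^{T}\mathbf{A}\mathbf{p}_{k}$ (Step \ref{CG-PR-STEP-h}), $\alpha = -\mathbf{g}_{k+\nicefrac{1}{2}}^{T}\mathbf{p}_{k}/\mathbf{p}_{k}^{T}\mathbf{h}_{k}$ (Step \ref{CG-PR-STEP-alpha}, with $\mathbf{g}_{k+\nicefrac{1}{2}}$ as in (\ref{g_z,k})), and $\mathbf{x}_{k+1} = \mathbf{x}_{k+\nicefrac{1}{2}} + \alpha\mathbf{p}_{k}$ (Step \ref{CG-PR-STEP-image_update}); the quadratic identity then gives $\mathbf{g}_{k+1} = \mathbf{g}_{k+\nicefrac{1}{2}} + \alpha\mathbf{h}_{k}$, and hence $\langle\mathbf{g}_{k+1},\mathbf{p}_{k}\rangle = \mathbf{g}_{k+\nicefrac{1}{2}}^{T}\mathbf{p}_{k} + \alpha\,\mathbf{p}_{k}^{T}\mathbf{h}_{k} = 0$. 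The same one-line computation, applied to the pre-loop initialization (Steps \ref{initial alpha}--\ref{update x_0}, where $\mathbf{h}_{0} = \mathbf{A}^{T}\mathbf{A}\mathbf{p}_{0}$, $\alpha = -\mathbf{g}_{0}^{T}\mathbf{p}_{0}/\mathbf{p}_{0}^{T}\mathbf{h}_{0}$ and $\mathbf{x}_{1} = \mathbf{x}_{0} + \alpha\mathbf{p}_{0}$), yields $\langle\mathbf{g}_{1},\mathbf{p}_{0}\rangle = 0$, which disposes of the index $n = 0$.

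It then remains to propagate the relations with smaller indices, and here I would simply note that in Algorithm \ref{S-CG} (and the CG-PR-STEP it calls) each indexed vector $\mathbf{x}_{m}$, $\mathbf{p}_{m}$, $\mathbf{h}_{m}$ is assigned exactly once during the whole run; in particular, at an execution of Step \ref{S-CG-call-CG-PR-step} with counter value $k$ the vectors $\mathbf{x}_{n+1}$ and $\mathbf{p}_{n}$ for $n \le k-1$ still hold the values they were given earlier (neither the perturbation of Step \ref{S-CG-perturbation} nor any later step overwrites them), so every instance of (\ref{eq: <g, p=003009=00003D0}) with index at most $k-1$ that was established earlier persists. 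The induction on $k$ is then immediate: the statement for counter value $1$ is the $n=0$ case together with the $n=k$ case at $k=1$, and, granted the statement for counter value $k-1$, the statement for counter value $k$ follows from the inherited relations for $0 \le n \le k-1$ together with the freshly established $n=k$ case.

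I do not anticipate a real obstacle: the argument is bookkeeping built around a single affine identity. The one place that genuinely needs attention is notational --- one must carefully distinguish the \emph{perturbed} gradient $\mathbf{g}_{k+\nicefrac{1}{2}} = \mathbf{A}^{T}(\mathbf{A}\mathbf{x}_{k+\nicefrac{1}{2}}-\mathbf{y})$, which is the $\mathbf{g'}$ actually computed in Step \ref{CG-PR-STEP-grad}, from the ``clean'' gradient $\mathbf{g}_{k+1}$ that appears in (\ref{eq: <g, p=003009=00003D0}): orthogonality to $\mathbf{p}_{k}$ holds for the latter and, in general, fails for the former, and it is exactly the choice of $\alpha$ that supplies the correction. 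One should also observe that the phrasing ``upon the execution of Step \ref{S-CG-call-CG-PR-step}'' already presupposes that the denominators $\mathbf{p}^{T}\mathbf{h}$ and $\mathbf{p'}^{T}\mathbf{h'}$ were nonzero when that step ran, so no separate well-definedness check is needed at this stage.
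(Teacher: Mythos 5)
Your proposal is correct and follows essentially the same route as the paper's proof: establish $\langle\mathbf{g}_{1},\mathbf{p}_{0}\rangle=0$ from the initialization steps, establish $\langle\mathbf{g}_{k+1},\mathbf{p}_{k}\rangle=0$ from the affine gradient update $\mathbf{g}_{k+1}=\mathbf{g}_{k+\nicefrac{1}{2}}+\alpha\mathbf{A}^{T}\mathbf{A}\mathbf{p}_{k}$ together with the choice of $\alpha$ in Step \ref{CG-PR-STEP-alpha}, and then observe that the indexed vectors are never reassigned so the earlier relations persist. Your explicit care in distinguishing $\mathbf{g}_{k+\nicefrac{1}{2}}$ from $\mathbf{g}_{k+1}$ is exactly the point the paper's argument turns on.
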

\begin{proof}
We first prove that (\ref{eq: <g, p=003009=00003D0}) holds for $n=0$.
It follows from (\ref{eq: g_k+1}) and Steps \ref{initial g} and
\ref{update x_0} of Algorithm \ref{S-CG} that

\[
\mathbf{g}_{1}=\mathbf{g}_{0}+\alpha\mathbf{A}^{T}\mathbf{A}\mathbf{p}_{0}.
\]

Combining this with Steps \ref{S-CG-h_0} and \ref{initial alpha}
of Algorithm \ref{S-CG}, we have that
\begin{eqnarray}
\left\langle \mathbf{g}_{1},\mathbf{p}_{0}\right\rangle  & = & \left\langle \mathbf{g}_{0},\mathbf{p}_{0}\right\rangle +\alpha\left\langle \mathbf{A}^{T}\mathbf{A}\mathbf{p}_{0},\mathbf{p}_{0}\right\rangle \nonumber \\
 & = & \left\langle \mathbf{g}_{0},\mathbf{p}_{0}\right\rangle -\frac{\left\langle \mathbf{g}_{0},\mathbf{p}_{0}\right\rangle }{\left\langle \mathbf{A}\mathbf{p}_{0},\mathbf{A}\mathbf{p}_{0}\right\rangle }\left\langle \mathbf{A}\mathbf{p}_{0},\mathbf{A}\mathbf{p}_{0}\right\rangle \label{eq:initial g.p=00003D0}\\
 & = & 0.\nonumber 
\end{eqnarray}

Note that (\ref{eq:initial g.p=00003D0}) always holds after Step
\ref{update x_0} of Algorithm \ref{S-CG} since $\mathbf{g}_{1}$
and $\mathbf{p}_{0}$ do not get changed after their initial assignments.

Next we prove that, for $k\geq1$, upon the execution of Step \ref{S-CG-call-CG-PR-step}
of Algorithm \ref{S-CG}, 

\begin{equation}
\left\langle \mathbf{g}_{k+1},\mathbf{p}_{k}\right\rangle =0.\label{eq: <g_k+1, p_k=003009=00003D0}
\end{equation}

It follows from (\ref{eq: g_k+1}), (\ref{g_z,k}) and Step \ref{CG-PR-STEP-image_update}
of Algorithm \ref{CG-PR-STEP} that, upon the execution of Step \ref{S-CG-call-CG-PR-step}
of Algorithm \ref{S-CG}, 

\begin{equation}
\mathbf{g}_{k+1}=\mathbf{g}_{k+\nicefrac{1}{2}}+\alpha\mathbf{A}^{T}\mathbf{A}\mathbf{p}_{k}.\label{eq: g_k+1 iterate}
\end{equation}

Combining this with Steps \ref{CG-PR-STEP-h} and \ref{CG-PR-STEP-alpha}
of Algorithm \ref{CG-PR-STEP}, we have that
\begin{eqnarray}
\left\langle \mathbf{g}_{k+1},\mathbf{p}_{k}\right\rangle  & = & \left\langle \mathbf{g}_{k+\nicefrac{1}{2}},\mathbf{p}_{k}\right\rangle +\alpha\left\langle \mathbf{A}^{T}\mathbf{A}\mathbf{p}_{k},\mathbf{p}_{k}\right\rangle \nonumber \\
 & = & \left\langle \mathbf{g}_{k+\nicefrac{1}{2}},\mathbf{p}_{k}\right\rangle -\frac{\left\langle \mathbf{g}_{k+\nicefrac{1}{2}},\mathbf{p}_{k}\right\rangle }{\left\langle \mathbf{A}\mathbf{p}_{k},\mathbf{A}\mathbf{p}_{k}\right\rangle }\left\langle \mathbf{A}\mathbf{p}_{k},\mathbf{A}\mathbf{p}_{k}\right\rangle \label{eq:g.p=00003D0}\\
 & = & 0.\nonumber 
\end{eqnarray}

We note that, for $k\geq1$, $\mathbf{x}_{k+1}$ and $\mathbf{p}_{k}$
do not get changed once they are obtained by an execution of Step
\ref{S-CG-call-CG-PR-step} of Algorithm \ref{S-CG}. Combing this
fact and (\ref{eq: g_k+1}), we know that, for $k\geq1$, $\mathbf{g}_{k+1}$
does not get changed once it is obtained. From these facts and (\ref{eq:initial g.p=00003D0}),
(\ref{eq:g.p=00003D0}), it follows that, upon the execution of Step
\ref{S-CG-call-CG-PR-step} of Algorithm \ref{S-CG}, it is the case
that, for all integers $n$ such that $0\leq n\leq k$, (\ref{eq: <g, p=003009=00003D0})
holds.\end{proof}
\begin{prop}
\label{prop: h_k}Just before an execution of Step \ref{S-CG-call-CG-PR-step}
of Algorithm \ref{S-CG}, it holds that
\begin{equation}
\mathbf{h}_{k-1}=\mathbf{A}^{T}\mathbf{A}\mathbf{p}_{k-1}.\label{eq: h_k-1}
\end{equation}
\end{prop}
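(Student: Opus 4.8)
The plan is to prove this by a straightforward induction on the value of the loop counter $k$ at the instants when Step \ref{S-CG-call-CG-PR-step} of Algorithm \ref{S-CG} is about to be executed (equivalently, on the number of times that step has been reached). The statement is essentially a bookkeeping claim: the vector supplied in the $\mathbf{h}$-slot of every call to Algorithm \ref{CG-PR-STEP} equals $\mathbf{A}^{T}\mathbf{A}$ times the vector supplied in the $\mathbf{p}$-slot. So the proof amounts to tracing which lines assign $\mathbf{h}_{j}$ and $\mathbf{p}_{j}$ and checking that those assignments are mutually consistent and are never subsequently overwritten.

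For the base case I would observe that the first time Step \ref{S-CG-call-CG-PR-step} is reached the counter equals $1$, so the claim to verify is $\mathbf{h}_{0}=\mathbf{A}^{T}\mathbf{A}\mathbf{p}_{0}$; this is exactly the assignment made in Step \ref{S-CG-h_0} of Algorithm \ref{S-CG}, and since neither $\mathbf{h}_{0}$ nor $\mathbf{p}_{0}$ is assigned again anywhere, the identity still holds when the first call is made. For the inductive step I would assume $\mathbf{h}_{k-1}=\mathbf{A}^{T}\mathbf{A}\mathbf{p}_{k-1}$ holds just before an execution of Step \ref{S-CG-call-CG-PR-step} with counter $k$; that execution is a call to Algorithm \ref{CG-PR-STEP} whose output slots are $\mathbf{p}_{k}$ and $\mathbf{h}_{k}$, and inside it Step \ref{CG-PR-STEP-h} sets $\mathbf{h}'=\mathbf{A}^{T}\mathbf{A}\mathbf{p}'$, i.e.\ $\mathbf{h}_{k}=\mathbf{A}^{T}\mathbf{A}\mathbf{p}_{k}$ upon return. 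Then I would note that between this return and the next arrival at Step \ref{S-CG-call-CG-PR-step} the only code executed is the counter increment in Step \ref{S-CG-update_k}, the while-test in Step \ref{S-CG-stopping}, and the perturbation in Step \ref{S-CG-perturbation}, none of which touch $\mathbf{h}_{k}$ or $\mathbf{p}_{k}$; hence with the counter now $k+1$ the required identity $\mathbf{h}_{(k+1)-1}=\mathbf{A}^{T}\mathbf{A}\mathbf{p}_{(k+1)-1}$ holds, closing the induction.

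The only part needing genuine care --- and the closest thing to an obstacle here --- is the \emph{write-once-then-only-read} property: I would want to check, essentially line by line through Algorithms \ref{S-CG} and \ref{CG-PR-STEP}, that for each index $j$ the vectors $\mathbf{h}_{j}$ and $\mathbf{p}_{j}$ are assigned exactly once (at initialization for $j=0$, and during the $j$-th call for $j\geq 1$) and thereafter occur only as inputs. This is precisely what justifies the phrase ``just before an execution'' in the statement; once it is in place, the rest follows immediately from the defining assignments in the two algorithms.
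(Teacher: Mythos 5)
Your proposal is correct and follows essentially the same route as the paper's own proof: an induction whose base case is the assignment $\mathbf{h}_{0}=\mathbf{A}^{T}\mathbf{A}\mathbf{p}_{0}$ in Step \ref{S-CG-h_0} of Algorithm \ref{S-CG} and whose inductive step is the assignment $\mathbf{h}'=\mathbf{A}^{T}\mathbf{A}\mathbf{p}'$ in Step \ref{CG-PR-STEP-h} of Algorithm \ref{CG-PR-STEP}, propagated through the repeated executions of the \textbf{while} loop. Your explicit attention to the write-once/never-overwritten property of $\mathbf{h}_{j}$ and $\mathbf{p}_{j}$ is a detail the paper leaves implicit, but it does not change the argument.
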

\begin{proof}
From Step \ref{S-CG-h_0} of Algorithm \ref{S-CG}, we know that,
for $k=1$, (\ref{eq: h_k-1}) holds just before the execution of
Step \ref{S-CG-call-CG-PR-step} of Algorithm \ref{S-CG}. By Step
\ref{CG-PR-STEP-h} of Algorithm \ref{CG-PR-STEP}, we see that upon
an execution of Step \ref{S-CG-call-CG-PR-step} of Algorithm \ref{S-CG},
it holds that 

\begin{equation}
\mathbf{h}_{k}=\mathbf{A}^{T}\mathbf{A}\mathbf{p}_{k}.\label{eq: h_k}
\end{equation}

These facts and the repeated executions of the \textbf{while} loop
of Algorithm \ref{S-CG} imply that, for $k\geq1$, (\ref{eq: h_k-1})
holds just before the execution of Step \ref{S-CG-call-CG-PR-step}
of Algorithm \ref{S-CG}.\end{proof}
\begin{prop}
\label{prop: otrho conj}Upon the execution of Step \ref{S-CG-call-CG-PR-step}
of Algorithm \ref{S-CG}, it is the case that

\begin{equation}
\left\langle \mathbf{A}\mathbf{p}_{k},\mathbf{A}\mathbf{p}_{k-1}\right\rangle =0.\label{eq: <p'_k, p'_k-1>=00003D0}
\end{equation}
\end{prop}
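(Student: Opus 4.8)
The plan is to verify the conjugacy relation $\left\langle \mathbf{A}\mathbf{p}_{k},\mathbf{A}\mathbf{p}_{k-1}\right\rangle =0$ directly from the update formula for $\mathbf{p}_k$ in Step \ref{CG-PR-STEP-new_p} of Algorithm \ref{CG-PR-STEP} and the specific choice of $\beta$ in Step \ref{CG-PR-STEP-beta}. Upon execution of Step \ref{S-CG-call-CG-PR-step}, we have $\mathbf{p}_{k}=-\mathbf{g}_{k+\nicefrac{1}{2}}+\beta\mathbf{p}_{k-1}$ where $\beta=\left\langle \mathbf{g}_{k+\nicefrac{1}{2}},\mathbf{h}_{k-1}\right\rangle /\left\langle \mathbf{p}_{k-1},\mathbf{h}_{k-1}\right\rangle$ (rewriting the transpose-products as inner products, and using the notation $\mathbf{g}_{k+\nicefrac{1}{2}}$ from (\ref{g_z,k}) for $\mathbf{g}'$).

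First I would invoke Proposition \ref{prop: h_k} to replace $\mathbf{h}_{k-1}$ by $\mathbf{A}^{T}\mathbf{A}\mathbf{p}_{k-1}$; this is the key substitution that turns the $\beta$-formula into something involving $\left\langle \mathbf{A}\mathbf{p}_{k-1},\mathbf{A}\mathbf{p}_{k-1}\right\rangle$ in the denominator and $\left\langle \mathbf{A}\mathbf{x}_{k+\nicefrac{1}{2}}$-related quantities in the numerator. Then I would compute
\begin{align}
\left\langle \mathbf{A}\mathbf{p}_{k},\mathbf{A}\mathbf{p}_{k-1}\right\rangle &= \left\langle \mathbf{p}_{k},\mathbf{A}^{T}\mathbf{A}\mathbf{p}_{k-1}\right\rangle = \left\langle \mathbf{p}_{k},\mathbf{h}_{k-1}\right\rangle \nonumber \\
&= \left\langle -\mathbf{g}_{k+\nicefrac{1}{2}}+\beta\mathbf{p}_{k-1},\mathbf{h}_{k-1}\right\rangle = -\left\langle \mathbf{g}_{k+\nicefrac{1}{2}},\mathbf{h}_{k-1}\right\rangle + \beta\left\langle \mathbf{p}_{k-1},\mathbf{h}_{k-1}\right\rangle. \nonumber
\end{align}
Substituting the value of $\beta$ from Step \ref{CG-PR-STEP-beta} makes the last expression vanish identically, which is exactly (\ref{eq: <p'_k, p'_k-1>=00003D0}). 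I would also note, as in the proofs of the preceding propositions, that once Step \ref{S-CG-call-CG-PR-step} produces $\mathbf{p}_k$ (and $\mathbf{p}_{k-1}$ was fixed at the previous iteration), these vectors are not subsequently altered, so the relation persists.

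I do not expect a genuine obstacle here: the identity is essentially the defining property of $\beta$, and the only nontrivial ingredient is Proposition \ref{prop: h_k}, which has already been established. The one point requiring a little care is bookkeeping about \emph{when} the relation holds — namely, that $\mathbf{p}_{k-1}$ referenced in Step \ref{S-CG-call-CG-PR-step} is indeed the direction vector from the previous loop iteration and that $\mathbf{h}_{k-1}$ was computed from it (which is precisely the content of Proposition \ref{prop: h_k}), plus handling the base case $k=1$ where $\mathbf{p}_0$ and $\mathbf{h}_0$ come from the initialization Steps \ref{S-CG-h_0}. Since the algebra is identical in the base and inductive cases (the $\beta$-formula and $\mathbf{h}$-relation have the same form throughout), a single computation suffices once Proposition \ref{prop: h_k} is in hand.
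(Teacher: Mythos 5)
Your proposal is correct and follows essentially the same route as the paper's proof: both invoke Proposition \ref{prop: h_k} to identify $\mathbf{h}_{k-1}$ with $\mathbf{A}^{T}\mathbf{A}\mathbf{p}_{k-1}$, expand $\left\langle \mathbf{A}\mathbf{p}_{k},\mathbf{A}\mathbf{p}_{k-1}\right\rangle =\left\langle \mathbf{p}_{k},\mathbf{A}^{T}\mathbf{A}\mathbf{p}_{k-1}\right\rangle$ using Step \ref{CG-PR-STEP-new_p}, and observe that the definition of $\beta$ in Step \ref{CG-PR-STEP-beta} makes the two terms cancel. The only cosmetic difference is that you carry $\mathbf{h}_{k-1}$ symbolically through the computation while the paper substitutes $\mathbf{A}^{T}\mathbf{A}\mathbf{p}_{k-1}$ explicitly; your closing remarks about the persistence of the relation and the base case match the paper's observations as well.
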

\begin{proof}
By Proposition \ref{prop: h_k}, we know that (\ref{eq: h_k-1}) holds
just before the execution of Step \ref{S-CG-call-CG-PR-step} of Algorithm
\ref{S-CG}. Consider now the execution of Step \ref{S-CG-call-CG-PR-step}
of Algorithm \ref{S-CG}, which is a call to Algorithm \ref{CG-PR-STEP}.
It is easily shown that, upon the execution of Step \ref{CG-PR-STEP-beta}
of Algorithm \ref{CG-PR-STEP}, 

\begin{equation}
\beta={\displaystyle \frac{\left\langle \mathbf{g}_{k+\nicefrac{1}{2}},\mathbf{A}^{T}\mathbf{A}\mathbf{p}_{k-1}\right\rangle }{\left\langle \mathbf{A}\mathbf{p}_{k-1},\mathbf{A}\mathbf{p}_{k-1}\right\rangle }}.\label{eq: beta_k}
\end{equation}

Combining this with Step \ref{CG-PR-STEP-new_p} of Algorithm \ref{CG-PR-STEP},
we have that

\begin{eqnarray}
\left\langle \mathbf{A}\boldsymbol{p}_{k},\mathbf{A}\boldsymbol{p}_{k-1}\right\rangle  & = & \left\langle \boldsymbol{p}_{k},\mathbf{A}{}^{T}\mathbf{A}\mathbf{p}_{k-1}\right\rangle \nonumber \\
 & =- & \left\langle \mathbf{g}_{k+\nicefrac{1}{2}},\mathbf{A}{}^{T}\mathbf{A}\mathbf{p}_{k-1}\right\rangle +\nonumber \\
 &  & \frac{\left\langle \mathbf{g}_{k+\nicefrac{1}{2}},\mathbf{A}{}^{T}\mathbf{A}\mathbf{p}_{k-1}\right\rangle }{\left\langle \mathbf{A}\mathbf{p}_{k-1},\mathbf{A}\mathbf{p}_{k-1}\right\rangle }\left\langle \mathbf{p}_{k-1},\mathbf{A}^{T}\mathbf{A}\mathbf{p}_{k-1}\right\rangle \label{eq:A_conjugate}\\
 & = & 0.\nonumber 
\end{eqnarray}
The remaining two steps of Algorithm \ref{CG-PR-STEP} do not change
the validity of (\ref{eq:A_conjugate}).
\end{proof}
In Step \ref{S-CG-perturbation} of Algorithm \ref{S-CG} the procedure
$\mathbf{x}_{k+\nicefrac{1}{2}}=\text{perturbed}(\mathbf{x}_{k})$
is called to add a perturbation to the vector $\mathbf{x}_{k}$ and
thus produce a new vector $\mathbf{x}_{k+\nicefrac{1}{2}}$ that is
generally better than $\mathbf{x}_{k}$ according to a secondary criterion.
Using (\ref{eq:superiorized}), this $\mathbf{x}_{k+\nicefrac{1}{2}}$
is expressed as

\begin{equation}
\mathbf{x}_{k+\nicefrac{1}{2}}=\mathbf{x}_{k}+\mathbf{u}_{k},\label{eq: Superiorization}
\end{equation}
where $\mathbf{u}_{k}=\gamma_{k}\mathbf{v}_{k}$ is the overall perturbation
at the superiorization stage. In the superiorization methodology,
the sequence of the perturbations $\left(\mathbf{u}_{k}\right){}_{k=0}^{\infty}$
for all $k\geq0$ in (\ref{eq: Superiorization}) are bounded and
the norm of the sequence is summable, that is

\begin{equation}
\sum_{k=0}^{\infty}\left\Vert \mathbf{u}_{k}\right\Vert _{2}<\infty.\label{eq: summable}
\end{equation}
In what follows we use $c$ be the Euclidean norm $||\mathbf{A}||_{2}$
of the matrix $\mathbf{A}$; that is
\begin{equation}
c=\sup_{\left\Vert \mathbf{x}\right\Vert _{2}\neq0}\frac{\left\Vert \mathbf{Ax}\right\Vert _{2}}{\left\Vert \mathbf{x}\right\Vert _{2}}\label{eq: lambda}
\end{equation}
 We note that

\begin{equation}
c^{2}=||\mathbf{A}{}^{T}\mathbf{A}||_{2},\label{eq: gamma}
\end{equation}
and that $c$ is the largest singular value of $\mathbf{A}$. Another
notation that we use in the rest of this paper is $\eta_{1}=\frac{1}{4c^{2}}$.
\begin{prop}
\label{prop :CG step}Suppose that the $\mathbf{x}_{k+\nicefrac{1}{2}}$
produced by the execution of Step \ref{S-CG-perturbation} of Algorithm
\ref{S-CG} is expressed as (\ref{eq: Superiorization}) in which
the $\mathbf{u}_{k}$ satisfies

\begin{equation}
\left\Vert \mathbf{u}_{k}\right\Vert _{2}=\eta_{0}\left\Vert \mathbf{g}_{k}\right\Vert _{2},\label{eq: up bnd u_k}
\end{equation}
with $0<\eta_{0}\leq\eta_{1}$ and 

\begin{equation}
\mathbf{g}_{k}=\mathbf{A}^{T}\left(\mathbf{A}\mathbf{x}_{k}-\mathbf{y}\right).\label{eq:gradient at x_k-1}
\end{equation}

Then, upon the execution of Step \ref{S-CG-call-CG-PR-step} of Algorithm
\ref{S-CG}, it is the case that 

\begin{equation}
2f(\mathbf{x}_{k+\nicefrac{1}{2}})-2f(\mathbf{x}_{k+1})\geq\frac{1}{16c^{2}}\left\Vert \mathbf{g}_{k}\right\Vert _{2}^{2}.\label{eq: reduced Obj}
\end{equation}
\end{prop}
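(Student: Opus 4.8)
The goal is a one-step energy-decrease estimate for the perturbation-resilient CG step, analogous to the classical estimate for steepest descent. The plan is to compute $f(\mathbf{x}_{k+1})$ explicitly in terms of $f(\mathbf{x}_{k+\nicefrac{1}{2}})$ and the search data, use the optimality of the step size $\alpha$ chosen in Step~\ref{CG-PR-STEP-alpha} of Algorithm~\ref{CG-PR-STEP} (i.e. an exact line search along $\mathbf{p}_k$), and then bound the resulting decrease from below. First I would write $\mathbf{x}_{k+1}=\mathbf{x}_{k+\nicefrac{1}{2}}+\alpha\mathbf{p}_k$ and expand
\begin{equation}
2f(\mathbf{x}_{k+\nicefrac{1}{2}})-2f(\mathbf{x}_{k+1})=-2\alpha\left\langle \mathbf{g}_{k+\nicefrac{1}{2}},\mathbf{p}_k\right\rangle-\alpha^2\left\langle \mathbf{A}\mathbf{p}_k,\mathbf{A}\mathbf{p}_k\right\rangle .
\end{equation}
Substituting $\alpha=-\left\langle\mathbf{g}_{k+\nicefrac12},\mathbf{p}_k\right\rangle/\left\langle\mathbf{A}\mathbf{p}_k,\mathbf{A}\mathbf{p}_k\right\rangle$ collapses this to $\left\langle\mathbf{g}_{k+\nicefrac12},\mathbf{p}_k\right\rangle^2/\left\langle\mathbf{A}\mathbf{p}_k,\mathbf{A}\mathbf{p}_k\right\rangle$, the standard exact-line-search identity.

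The next task is to bound $\left\langle\mathbf{g}_{k+\nicefrac12},\mathbf{p}_k\right\rangle^2/\left\langle\mathbf{A}\mathbf{p}_k,\mathbf{A}\mathbf{p}_k\right\rangle$ below by $\frac{1}{16c^2}\|\mathbf{g}_k\|_2^2$. For the denominator, $\left\langle\mathbf{A}\mathbf{p}_k,\mathbf{A}\mathbf{p}_k\right\rangle\le c^2\|\mathbf{p}_k\|_2^2$, so it suffices to show $\left\langle\mathbf{g}_{k+\nicefrac12},\mathbf{p}_k\right\rangle^2\ge \tfrac14\|\mathbf{g}_k\|_2^2\,\|\mathbf{p}_k\|_2^2$ — that is, $|\left\langle\mathbf{g}_{k+\nicefrac12},\mathbf{p}_k\right\rangle|\ge \tfrac12\|\mathbf{g}_k\|_2\|\mathbf{p}_k\|_2$. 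Here I would use the decomposition $\mathbf{p}_k=-\mathbf{g}_{k+\nicefrac12}+\beta\mathbf{p}_{k-1}$ from Step~\ref{CG-PR-STEP-new_p}, together with the conjugacy relation $\left\langle\mathbf{A}\mathbf{p}_k,\mathbf{A}\mathbf{p}_{k-1}\right\rangle=0$ (Proposition~\ref{prop: otrho conj}), and the fact that $\beta$ (equation~\eqref{eq: beta_k}) is exactly the coefficient that makes $\left\langle\mathbf{g}_{k+\nicefrac12},\mathbf{p}_{k-1}\right\rangle$-type cross terms vanish appropriately; one finds $\left\langle\mathbf{g}_{k+\nicefrac12},\mathbf{p}_k\right\rangle=-\|\mathbf{g}_{k+\nicefrac12}\|_2^2$ (using that $\left\langle\mathbf{g}_{k+\nicefrac12},\mathbf{p}_{k-1}\right\rangle$ is controlled via Step~\ref{CG-PR-STEP-beta}; in fact for the PR step the key identity is $\left\langle\mathbf{g}_{k+\nicefrac12},\mathbf{p}_k\right\rangle=-\|\mathbf{g}_{k+\nicefrac12}\|_2^2+\beta\left\langle\mathbf{g}_{k+\nicefrac12},\mathbf{p}_{k-1}\right\rangle$, and one argues the surviving expression is still $\le -\|\mathbf{g}_{k+\nicefrac12}\|_2^2$ in magnitude, or extracts a bound of the form $|\left\langle\mathbf{g}_{k+\nicefrac12},\mathbf{p}_k\right\rangle|\ge\|\mathbf{g}_{k+\nicefrac12}\|_2^2$ and separately $\|\mathbf{p}_k\|_2\le 2\|\mathbf{g}_{k+\nicefrac12}\|_2$ or similar). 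The resulting decrease is then at least $\|\mathbf{g}_{k+\nicefrac12}\|_2^2/c^2$ up to the combinatorial constant.

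Finally I would convert the bound in terms of $\|\mathbf{g}_{k+\nicefrac12}\|_2$ into one in terms of $\|\mathbf{g}_k\|_2$. Since $\mathbf{g}_{k+\nicefrac12}=\mathbf{A}^T(\mathbf{A}\mathbf{x}_{k+\nicefrac12}-\mathbf{y})=\mathbf{g}_k+\mathbf{A}^T\mathbf{A}\mathbf{u}_k$, we get $\|\mathbf{g}_{k+\nicefrac12}\|_2\ge\|\mathbf{g}_k\|_2-\|\mathbf{A}^T\mathbf{A}\|_2\|\mathbf{u}_k\|_2\ge\|\mathbf{g}_k\|_2-c^2\eta_0\|\mathbf{g}_k\|_2=(1-c^2\eta_0)\|\mathbf{g}_k\|_2$, using~\eqref{eq: up bnd u_k} and~\eqref{eq: gamma}. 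With $\eta_0\le\eta_1=\frac{1}{4c^2}$ this gives $\|\mathbf{g}_{k+\nicefrac12}\|_2\ge\tfrac34\|\mathbf{g}_k\|_2$, so $\|\mathbf{g}_{k+\nicefrac12}\|_2^2\ge\tfrac{9}{16}\|\mathbf{g}_k\|_2^2$; combining with the $1/c^2$ (and whatever constant from the direction analysis) yields the claimed $\frac{1}{16c^2}\|\mathbf{g}_k\|_2^2$, the constant being deliberately generous. The main obstacle I anticipate is the middle step: carefully pinning down $\left\langle\mathbf{g}_{k+\nicefrac12},\mathbf{p}_k\right\rangle$ and $\|\mathbf{p}_k\|_2$ for the PR variant — the perturbation breaks the clean CG identities (e.g. $\mathbf{g}_{k+\nicefrac12}$ need not be orthogonal to $\mathbf{p}_{k-1}$), so the cross-term $\beta\left\langle\mathbf{g}_{k+\nicefrac12},\mathbf{p}_{k-1}\right\rangle$ must be bounded rather than eliminated, and one has to verify it does not destroy the sign or the lower bound on $|\left\langle\mathbf{g}_{k+\nicefrac12},\mathbf{p}_k\right\rangle|$. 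It is this bookkeeping, rather than any deep inequality, that makes the factor $16$ (as opposed to $4$) necessary.
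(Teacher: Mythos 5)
Your opening and closing moves coincide with the paper's proof: the exact-line-search identity reducing the decrease to $\left\langle \mathbf{g}_{k+\nicefrac{1}{2}},\mathbf{p}_{k}\right\rangle ^{2}/\left\Vert \mathbf{A}\mathbf{p}_{k}\right\Vert _{2}^{2}$ is exactly (\ref{eq: reduced obj part2}), and your final conversion $\left\Vert \mathbf{g}_{k+\nicefrac{1}{2}}\right\Vert _{2}\geq\left(1-c^{2}\eta_{0}\right)\left\Vert \mathbf{g}_{k}\right\Vert _{2}\geq\frac{3}{4}\left\Vert \mathbf{g}_{k}\right\Vert _{2}$ is precisely (\ref{eq: bound norm g_zk})--(\ref{eq: bound norm g_zk-part2}). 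But the middle step, which you yourself flag as the main obstacle, is left unresolved, and it is the heart of the proposition. The paper closes it with two ideas that your sketch does not supply. First, to bound the cross term $\left\langle \mathbf{g}_{k+\nicefrac{1}{2}},\beta\mathbf{p}_{k-1}\right\rangle $ it writes $\mathbf{g}_{k+\nicefrac{1}{2}}=\mathbf{g}_{k}+\mathbf{A}^{T}\mathbf{A}\mathbf{u}_{k}$ and invokes Proposition \ref{prop: ortho gp} (namely $\left\langle \mathbf{g}_{k},\mathbf{p}_{k-1}\right\rangle =0$, which survives the perturbation because $\mathbf{g}_{k}$ is the \emph{unperturbed} gradient at $\mathbf{x}_{k}$, itself produced by an exact line search along $\mathbf{p}_{k-1}$); the cross term then collapses to $\left\langle \mathbf{A}\mathbf{u}_{k},\beta\mathbf{A}\mathbf{p}_{k-1}\right\rangle $, which is of order $\eta_{0}$. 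Second, the conjugacy of Proposition \ref{prop: otrho conj} yields the Pythagorean identity (\ref{eq: Pytha conj}), $\left\Vert \mathbf{A}\mathbf{g}_{k+\nicefrac{1}{2}}\right\Vert _{2}^{2}=\left\Vert \mathbf{A}\mathbf{p}_{k}\right\Vert _{2}^{2}+\left\Vert \beta\mathbf{A}\mathbf{p}_{k-1}\right\Vert _{2}^{2}$, which simultaneously bounds $\left\Vert \beta\mathbf{A}\mathbf{p}_{k-1}\right\Vert _{2}$ (needed for the cross term) and $\left\Vert \mathbf{A}\mathbf{p}_{k}\right\Vert _{2}$ (the denominator) by $\left\Vert \mathbf{A}\mathbf{g}_{k+\nicefrac{1}{2}}\right\Vert _{2}\leq c\left\Vert \mathbf{g}_{k+\nicefrac{1}{2}}\right\Vert _{2}$. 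Together with $\eta_{0}\leq\frac{1}{4c^{2}}$ these give $\left\langle \mathbf{g}_{k+\nicefrac{1}{2}},\mathbf{p}_{k}\right\rangle \leq-\left(1-2c^{2}\eta_{0}\right)\left\Vert \mathbf{g}_{k+\nicefrac{1}{2}}\right\Vert _{2}^{2}\leq-\frac{1}{2}\left\Vert \mathbf{g}_{k+\nicefrac{1}{2}}\right\Vert _{2}^{2}$, and the constant $\frac{1}{16c^{2}}=\frac{1}{4}\cdot\frac{1}{4c^{2}}$ falls out.

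Separately, your proposed alternative of bounding the denominator by $\left\Vert \mathbf{A}\mathbf{p}_{k}\right\Vert _{2}^{2}\leq c^{2}\left\Vert \mathbf{p}_{k}\right\Vert _{2}^{2}$ and then seeking an estimate such as $\left\Vert \mathbf{p}_{k}\right\Vert _{2}\leq2\left\Vert \mathbf{g}_{k+\nicefrac{1}{2}}\right\Vert _{2}$ would fail: the conjugacy only controls $\left\Vert \beta\mathbf{A}\mathbf{p}_{k-1}\right\Vert _{2}$, not $\left\Vert \beta\mathbf{p}_{k-1}\right\Vert _{2}$, and for the ill-conditioned or singular $\mathbf{A}^{T}\mathbf{A}$ that the paper explicitly allows, the latter can exceed the former by the reciprocal of the smallest nonzero singular value of $\mathbf{A}$. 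Any route through $\left\Vert \mathbf{p}_{k}\right\Vert _{2}$ therefore picks up a condition-number dependence and cannot yield the clean $\frac{1}{16c^{2}}$ constant; the fix is to keep every quantity under the map $\mathbf{A}$, as the paper does throughout.
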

\begin{proof}
We look at the details of what happens during the execution of Step
\ref{S-CG-call-CG-PR-step} of Algorithm \ref{S-CG}, which is a call
to Algorithm \ref{CG-PR-STEP}. It follows from (\ref{eq:fuai_x})
that
\begin{eqnarray}
 & 2f(\mathbf{x}_{k+\nicefrac{1}{2}})-2f(\mathbf{x}_{k+1})\label{eq: reduced Obj part1}\\
 & =\left\langle \mathbf{A}\mathbf{x}_{k+\nicefrac{1}{2}}-\mathbf{y},\mathbf{A}\mathbf{x}_{k+\nicefrac{1}{2}}-\mathbf{y}\right\rangle -\left\langle \mathbf{A}\mathbf{x}_{k+1}-\mathbf{y},\mathbf{A}\mathbf{x}_{k+1}-\mathbf{y}\right\rangle .\nonumber 
\end{eqnarray}

From Step \ref{CG-PR-STEP-image_update} of Algorithm \ref{CG-PR-STEP},
we see that,

\begin{equation}
\mathbf{x}_{k+1}=\mathbf{x}_{k+\nicefrac{1}{2}}+\alpha\mathbf{p}_{k}.\label{eq: updated x_k+1}
\end{equation}

Combining this with (\ref{eq: reduced Obj part1}) we have that
\begin{equation}
\begin{array}{ll}
2f(\mathbf{x}_{k+\nicefrac{1}{2}})-2f(\mathbf{x}_{k+1})\\
\;=\left\langle \mathbf{A}\mathbf{x}_{k+\nicefrac{1}{2}}-\mathbf{y},\mathbf{A}\mathbf{x}_{k+\nicefrac{1}{2}}-\mathbf{y}\right\rangle \\
\quad-\left\langle \left(\mathbf{A}\mathbf{x}_{k+\nicefrac{1}{2}}-\mathbf{y}\right)+\alpha\mathbf{A}\mathbf{p}_{k},\left(\mathbf{A}\mathbf{x}_{k+\nicefrac{1}{2}}-\mathbf{y}\right)+\alpha\mathbf{A}\mathbf{p}_{k}\right\rangle \\
\;=-2\alpha\left\langle \mathbf{A}\mathbf{x}_{k+\nicefrac{1}{2}}-\mathbf{y},\mathbf{A}\mathbf{p}_{k}\right\rangle -\alpha^{2}\left\Vert \mathbf{A}\mathbf{p}_{k}\right\Vert _{2}^{2}\\
\;=-2\alpha\left\langle \mathbf{g}_{k+\nicefrac{1}{2}},\mathbf{p}_{k}\right\rangle -\alpha^{2}\left\Vert \mathbf{A}\mathbf{p}_{k}\right\Vert _{2}^{2}.
\end{array}\label{eq: reduced Obj part1-1}
\end{equation}
From Steps \ref{CG-PR-STEP-h} and \ref{CG-PR-STEP-alpha} of Algorithm
\ref{CG-PR-STEP}, it follows that

\begin{equation}
\alpha=-\frac{\left\langle \mathbf{g}_{k+\nicefrac{1}{2}},\mathbf{p}_{k}\right\rangle }{\left\Vert \mathbf{A}\boldsymbol{p}_{k}\right\Vert _{2}^{2}}.\label{eq: alpha}
\end{equation}

Combining this with (\ref{eq: reduced Obj part1-1}), we know that
\begin{eqnarray}
 &  & 2f(\mathbf{x}_{k+\nicefrac{1}{2}})-2f(\mathbf{x}_{k+1})\nonumber \\
 & = & 2\frac{\left\langle \mathbf{g}_{k+\nicefrac{1}{2}},\mathbf{p}_{k}\right\rangle ^{2}}{\left\Vert \mathbf{A}\boldsymbol{p}_{k}\right\Vert _{2}^{2}}-\frac{\left\langle \mathbf{g}_{k+\nicefrac{1}{2}},\mathbf{p}_{k}\right\rangle ^{2}}{\left\Vert \mathbf{A}\boldsymbol{p}_{k}\right\Vert _{2}^{2}}\label{eq: reduced obj part2}\\
 & = & \frac{\left\langle \mathbf{g}_{k+\nicefrac{1}{2}},\mathbf{p}_{k}\right\rangle ^{2}}{\left\Vert \mathbf{A}\mathbf{p}_{k}\right\Vert _{2}^{2}}.\nonumber 
\end{eqnarray}

Now we calculate $\left\langle \mathbf{g}_{k+\nicefrac{1}{2}},\mathbf{p}_{k}\right\rangle $.
By Step \ref{CG-PR-STEP-new_p} of Algorithm \ref{CG-PR-STEP}, we
have that

\begin{equation}
\mathbf{p}_{k}=-\mathbf{g}_{k+\nicefrac{1}{2}}+\beta\mathbf{p}_{k-1}.\label{eq: p_k iter}
\end{equation}
It follows that

\begin{equation}
\left\langle \mathbf{g}_{k+\nicefrac{1}{2}},\mathbf{p}_{k}\right\rangle =-\left\Vert \mathbf{g}_{k+\nicefrac{1}{2}}\right\Vert _{2}^{2}+\left\langle \mathbf{g}_{k+\nicefrac{1}{2}},\beta\mathbf{p}_{k-1}\right\rangle .\label{eq:< g_z,k, p_k> part1}
\end{equation}

By (\ref{eq:gradient at x_k-1}) and (\ref{eq: Superiorization})
we have that

\begin{equation}
\mathbf{g}_{k+1/2}=\mathbf{g}_{k}+\mathbf{A}^{T}\mathbf{\mathbf{A}}\mathbf{u}_{k}.\label{eq:g_z,k and g_k}
\end{equation}

From (\ref{eq: <g, p=003009=00003D0}) of Proposition \ref{prop: ortho gp}
with $n=k-1$, we have that

\begin{equation}
\left\langle \mathbf{g}_{k},\beta\mathbf{p}_{k-1}\right\rangle =0.\label{eq: g_k orthogonal to p_k-1}
\end{equation}

Combining these two facts, we have that

\begin{eqnarray}
 &  & \left\langle \mathbf{g}_{k+1/2},\beta\mathbf{p}_{k-1}\right\rangle \nonumber \\
 & = & \left\langle \mathbf{g}_{k},\beta\mathbf{p}_{k-1}\right\rangle +\left\langle \mathbf{A}^{T}\mathbf{\mathbf{A}}\mathbf{u}_{k},\beta\mathbf{p}_{k-1}\right\rangle \nonumber \\
 & = & \left\langle \mathbf{\mathbf{A}}\mathbf{u}_{k},\beta\mathbf{A}\mathbf{p}_{k-1}\right\rangle \label{eq:< g_z,k, p_k-1> part 1}\\
 & \leq & \left\Vert \mathbf{A}\mathbf{u}_{k}\right\Vert _{2}\cdot\left\Vert \beta\mathbf{A}\mathbf{p}_{k-1}\right\Vert _{2}.\nonumber 
\end{eqnarray}

It follows from Step \ref{CG-PR-STEP-new_p} of Algorithm \ref{CG-PR-STEP}
that 

\begin{equation}
\mathbf{A}\mathbf{p}_{k}=-\mathbf{A}\mathbf{g}_{k+\nicefrac{1}{2}}+\beta\mathbf{A}\mathbf{p}_{k-1}.\label{eq: Rp_k}
\end{equation}

Combining this with the orthogonality expressed in (\ref{eq: <p'_k, p'_k-1>=00003D0})
of Proposition \ref{prop: otrho conj}, we have that

\begin{equation}
\left\Vert \mathbf{A}\mathbf{g}_{k+\nicefrac{1}{2}}\right\Vert _{2}^{2}=\left\Vert \mathbf{A}\mathbf{p}_{k}\right\Vert _{2}^{2}+\left\Vert \beta\mathbf{A}\mathbf{p}_{k-1}\right\Vert _{2}^{2}.\label{eq: Pytha conj}
\end{equation}
Consequently, 
\begin{equation}
\left\Vert \mathbf{A}\mathbf{p}_{k}\right\Vert _{2}\leq\left\Vert \mathbf{A}\mathbf{g}_{k+\nicefrac{1}{2}}\right\Vert _{2},\label{eq: Up Bnd Rp_k}
\end{equation}
and

\begin{equation}
\left\Vert \beta\mathbf{A}\mathbf{p}_{k-1}\right\Vert _{2}\leq\left\Vert \mathbf{A}\mathbf{g}_{k+\nicefrac{1}{2}}\right\Vert _{2}.\label{eq: Up Bnd 1}
\end{equation}

From this and (\ref{eq:< g_z,k, p_k-1> part 1}), we know that

\begin{eqnarray}
\left\langle \mathbf{g}_{k+\nicefrac{1}{2}},\beta\mathbf{p}_{k-1}\right\rangle  & \leq & \left\Vert \mathbf{\mathbf{A}}\mathbf{u}_{k}\right\Vert _{2}\cdot\left\Vert \mathbf{A}\mathbf{g}_{k+\nicefrac{1}{2}}\right\Vert _{2}\nonumber \\
 & \leq & c^{2}\left\Vert \mathbf{u}_{k}\right\Vert _{2}\cdot\left\Vert \mathbf{g}_{k+\nicefrac{1}{2}}\right\Vert _{2}\label{eq: < g_z,k, p_k-1> part 2}\\
 & = & c^{2}\eta_{0}\left\Vert \mathbf{g}_{k}\right\Vert _{2}\cdot\left\Vert \mathbf{g}_{k+\nicefrac{1}{2}}\right\Vert _{2}.\nonumber 
\end{eqnarray}
The second inequality of (\ref{eq: < g_z,k, p_k-1> part 2}) comes
from (\ref{eq: lambda}) and the equality comes from (\ref{eq: up bnd u_k}).

By (\ref{eq:g_z,k and g_k}), (\ref{eq: gamma}) and (\ref{eq: up bnd u_k}),
we have that

\begin{eqnarray}
\left\Vert \mathbf{g}_{k+\nicefrac{1}{2}}\right\Vert _{2} & \geq & \left\Vert \mathbf{g}_{k}\right\Vert _{2}-\left\Vert \mathbf{A}^{T}\mathbf{\mathbf{A}}\mathbf{u}_{k}\right\Vert _{2}\nonumber \\
 & \geq & \left\Vert \mathbf{g}_{k}\right\Vert _{2}-c^{2}\eta_{0}\left\Vert \mathbf{g}_{k}\right\Vert _{2}\label{eq: bound norm g_zk}\\
 & = & \left(1-c^{2}\eta_{0}\right)\left\Vert \mathbf{g}_{k}\right\Vert _{2}.\nonumber 
\end{eqnarray}

From this and and the assumption that $0<\eta_{0}\leq\frac{1}{4c^{2}}$,
it follows that

\begin{equation}
\left\Vert \mathbf{g}_{k+\nicefrac{1}{2}}\right\Vert _{2}\geq\frac{3\left\Vert \mathbf{g}_{k}\right\Vert _{2}}{4}\geq\frac{\left\Vert \mathbf{g}_{k}\right\Vert _{2}}{2}.\label{eq: bound norm g_zk-part2}
\end{equation}

From this and (\ref{eq: < g_z,k, p_k-1> part 2}), it follows that

\begin{equation}
\left\langle \mathbf{g}_{k+\nicefrac{1}{2}},\beta\mathbf{p}_{k-1}\right\rangle \leq2c^{2}\eta_{0}\left\Vert \mathbf{g}_{k+\nicefrac{1}{2}}\right\Vert _{2}^{2}.\label{eq: < g_z,k, p_k-1> part 3}
\end{equation}

From this and (\ref{eq:< g_z,k, p_k> part1}), it follows that

\begin{equation}
\left\langle \mathbf{g}_{k+\nicefrac{1}{2}},\mathbf{p}_{k}\right\rangle \leq-(1-2c^{2}\eta_{0})\left\Vert \mathbf{g}_{k+\nicefrac{1}{2}}\right\Vert _{2}^{2}.\label{eq: bnd inp g_zk and p_k}
\end{equation}
Combining this with the assumption that $0<\eta_{0}\leq\frac{1}{4c^{2}}$,
we have that

\begin{equation}
\left\langle \mathbf{g}_{k+\nicefrac{1}{2}},\mathbf{p}_{k}\right\rangle ^{2}\geq\frac{1}{4}\left\Vert \mathbf{g}_{k+\nicefrac{1}{2}}\right\Vert _{2}^{4}.\label{eq: bnd sq inp g_zk and p_k}
\end{equation}
From this and (\ref{eq: reduced obj part2}) and (\ref{eq: Up Bnd Rp_k}),
it follows that

\begin{eqnarray}
2f(\mathbf{x}_{k+\nicefrac{1}{2}})-2f(\mathbf{x}_{k+1}) & = & \frac{\left\langle \mathbf{g}_{k+\nicefrac{1}{2}},\mathbf{p}_{k}\right\rangle ^{2}}{\left\Vert \mathbf{A}\mathbf{p}_{k}\right\Vert _{2}^{2}}\nonumber \\
 & \geq & \frac{\left\Vert \mathbf{g}_{k+\nicefrac{1}{2}}\right\Vert _{2}^{4}}{4\left\Vert \mathbf{A}\mathbf{g}_{k+\nicefrac{1}{2}}\right\Vert _{2}^{2}}\label{eq: reduced obj part 3}\\
 & \geq & \frac{\left\Vert \mathbf{g}_{k+\nicefrac{1}{2}}\right\Vert _{2}^{2}}{4c^{2}}.\nonumber 
\end{eqnarray}

From this and (\ref{eq: bound norm g_zk-part2}), (\ref{eq: reduced Obj})
follows.\end{proof}
\begin{lem}
\label{Lem: Overall step}Let $\eta_{2}$ be the positive solution
to the quadratic equation on the variable $\eta$:

\begin{equation}
\left(2+c^{2}\eta\right)\eta=\frac{1}{32c^{2}}.\label{eq: equation of coef}
\end{equation}

During the execution of Algorithm \ref{S-CG}, suppose that, for some
integer $k$, immediately after the execution of Step \ref{S-CG-perturbation}
of Algorithm \ref{S-CG}, the returned $\mathbf{x}_{k+\nicefrac{1}{2}}$
is expressed as (\ref{eq: Superiorization}) in which the $\mathbf{u}_{k}$
satisfies (\ref{eq: up bnd u_k}) with $0<\eta_{0}\leq\eta_{l}=\min\left\{ \eta_{1},\eta_{2}\right\} $
and $\mathbf{g}_{k}$ defined in (\ref{eq:gradient at x_k-1}). Then,
upon the execution of Step \ref{S-CG-call-CG-PR-step} of Algorithm
\ref{S-CG}, it is the case that 

\begin{equation}
2f(\mathbf{x}_{k})-2f(\mathbf{x}_{k+1})\geq\frac{1}{32c^{2}}\left\Vert \mathbf{g}_{k}\right\Vert _{2}^{2}.\label{eq: reduced Obj-1}
\end{equation}
\end{lem}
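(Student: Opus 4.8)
The plan is to establish \eqref{eq: reduced Obj-1} by combining the per-step decrease already proven in Proposition \ref{prop :CG step} (which compares $f(\mathbf{x}_{k+\nicefrac{1}{2}})$ with $f(\mathbf{x}_{k+1})$) with a bound on the ascent caused by the perturbation step (which compares $f(\mathbf{x}_{k})$ with $f(\mathbf{x}_{k+\nicefrac{1}{2}})$). The key decomposition is
\begin{equation}
2f(\mathbf{x}_{k})-2f(\mathbf{x}_{k+1})=\bigl(2f(\mathbf{x}_{k})-2f(\mathbf{x}_{k+\nicefrac{1}{2}})\bigr)+\bigl(2f(\mathbf{x}_{k+\nicefrac{1}{2}})-2f(\mathbf{x}_{k+1})\bigr).
\end{equation}
Proposition \ref{prop :CG step} gives us that the second parenthesized term is at least $\frac{1}{16c^{2}}\left\Vert \mathbf{g}_{k}\right\Vert _{2}^{2}$, so it remains to show that the first term, which may be negative, is bounded below by something like $-\frac{1}{32c^{2}}\left\Vert \mathbf{g}_{k}\right\Vert _{2}^{2}$; then the sum is at least $\frac{1}{32c^{2}}\left\Vert \mathbf{g}_{k}\right\Vert _{2}^{2}$, which is exactly \eqref{eq: reduced Obj-1}.

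First I would expand $2f(\mathbf{x}_{k})-2f(\mathbf{x}_{k+\nicefrac{1}{2}})$ using $\mathbf{x}_{k+\nicefrac{1}{2}}=\mathbf{x}_{k}+\mathbf{u}_{k}$ from \eqref{eq: Superiorization}, just as was done in the first part of the proof of Proposition \ref{prop :CG step}. This yields
\begin{equation}
2f(\mathbf{x}_{k})-2f(\mathbf{x}_{k+\nicefrac{1}{2}})=-2\left\langle \mathbf{g}_{k},\mathbf{u}_{k}\right\rangle -\left\Vert \mathbf{A}\mathbf{u}_{k}\right\Vert _{2}^{2},
\end{equation}
using \eqref{eq:gradient at x_k-1}. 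To bound this below I would apply Cauchy--Schwarz to get $\left\langle \mathbf{g}_{k},\mathbf{u}_{k}\right\rangle \leq\left\Vert \mathbf{g}_{k}\right\Vert _{2}\left\Vert \mathbf{u}_{k}\right\Vert _{2}$ and the operator-norm bound $\left\Vert \mathbf{A}\mathbf{u}_{k}\right\Vert _{2}^{2}\leq c^{2}\left\Vert \mathbf{u}_{k}\right\Vert _{2}^{2}$ from \eqref{eq: lambda} and \eqref{eq: gamma}. Substituting the hypothesis \eqref{eq: up bnd u_k}, namely $\left\Vert \mathbf{u}_{k}\right\Vert _{2}=\eta_{0}\left\Vert \mathbf{g}_{k}\right\Vert _{2}$, gives
\begin{equation}
2f(\mathbf{x}_{k})-2f(\mathbf{x}_{k+\nicefrac{1}{2}})\geq-\bigl(2\eta_{0}+c^{2}\eta_{0}^{2}\bigr)\left\Vert \mathbf{g}_{k}\right\Vert _{2}^{2}=-\left(2+c^{2}\eta_{0}\right)\eta_{0}\left\Vert \mathbf{g}_{k}\right\Vert _{2}^{2}.
\end{equation}

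The role of $\eta_{2}$ now becomes clear: it is defined precisely so that $\left(2+c^{2}\eta_{2}\right)\eta_{2}=\frac{1}{32c^{2}}$, and since the left-hand side of \eqref{eq: equation of coef} is increasing in $\eta>0$, the assumption $0<\eta_{0}\leq\eta_{l}\leq\eta_{2}$ forces $\left(2+c^{2}\eta_{0}\right)\eta_{0}\leq\frac{1}{32c^{2}}$. Hence $2f(\mathbf{x}_{k})-2f(\mathbf{x}_{k+\nicefrac{1}{2}})\geq-\frac{1}{32c^{2}}\left\Vert \mathbf{g}_{k}\right\Vert _{2}^{2}$. Adding this to the Proposition \ref{prop :CG step} bound $2f(\mathbf{x}_{k+\nicefrac{1}{2}})-2f(\mathbf{x}_{k+1})\geq\frac{1}{16c^{2}}\left\Vert \mathbf{g}_{k}\right\Vert _{2}^{2}=\frac{2}{32c^{2}}\left\Vert \mathbf{g}_{k}\right\Vert _{2}^{2}$ yields the claimed \eqref{eq: reduced Obj-1}. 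The only mild subtlety — not really an obstacle — is checking that the hypotheses of Proposition \ref{prop :CG step} are met: that proposition needs $0<\eta_{0}\leq\eta_{1}$, which holds because $\eta_{l}=\min\{\eta_{1},\eta_{2}\}\leq\eta_{1}$; and it needs the orthogonality/conjugacy facts of Propositions \ref{prop: ortho gp}--\ref{prop: otrho conj}, which hold unconditionally upon execution of Step \ref{S-CG-call-CG-PR-step}. One should also note that $\eta_{2}$ is well-defined (the quadratic $c^{2}\eta^{2}+2\eta-\frac{1}{32c^{2}}=0$ has exactly one positive root since its constant term is negative), so the statement is not vacuous. Beyond bookkeeping, everything reduces to the elementary inequality $(2+c^{2}\eta)\eta$ being monotone in $\eta$, so there is no genuinely hard step here — the work was all front-loaded into Proposition \ref{prop :CG step}.
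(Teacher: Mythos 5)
Your proposal is correct and follows essentially the same route as the paper: both split $2f(\mathbf{x}_{k})-2f(\mathbf{x}_{k+1})$ into the perturbation part and the CG-step part, bound the former by $\left(2+c^{2}\eta_{0}\right)\eta_{0}\left\Vert \mathbf{g}_{k}\right\Vert _{2}^{2}$ via Cauchy--Schwarz and the operator norm, invoke the monotonicity of $\left(2+c^{2}\eta\right)\eta$ together with the defining property of $\eta_{2}$ to cap this at $\frac{1}{32c^{2}}\left\Vert \mathbf{g}_{k}\right\Vert _{2}^{2}$, and add the $\frac{1}{16c^{2}}\left\Vert \mathbf{g}_{k}\right\Vert _{2}^{2}$ bound from Proposition \ref{prop :CG step}. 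The only cosmetic difference is that you bound the perturbation term from below directly, whereas the paper bounds its absolute value; the content is identical.
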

\begin{proof}
By (\ref{eq:fuai_x}) and (\ref{eq: Superiorization}), we have that
immediately after the execution of Step \ref{S-CG-perturbation} of
Algorithm \ref{S-CG}

\begin{eqnarray}
 &  & 2f(\mathbf{x}_{k})-2f(\mathbf{x}_{k+\nicefrac{1}{2}})\nonumber \\
 & = & \left\langle \mathbf{A}\mathbf{x}_{k}-\mathbf{y},\mathbf{A}\mathbf{x}_{k}-\mathbf{y}\right\rangle -\left\langle \mathbf{A}\mathbf{x}_{k+\nicefrac{1}{2}}-\mathbf{y},\mathbf{A}\mathbf{x}_{k+\nicefrac{1}{2}}-\mathbf{y}\right\rangle \nonumber \\
 & = & -2\left\langle \mathbf{A}\mathbf{x}_{k}-\mathbf{y},\mathbf{A}\mathbf{u}_{k}\right\rangle -\left\Vert \mathbf{A}\mathbf{u}_{k}\right\Vert _{2}^{2}.\label{eq: dec obj supr}
\end{eqnarray}

Combining this with (\ref{eq:gradient at x_k-1}), we know that

\begin{eqnarray}
2f(\mathbf{x}_{k})-2f(\mathbf{x}_{k+\nicefrac{1}{2}}) & = & -2\left\langle \mathbf{g}_{k},\mathbf{u}_{k}\right\rangle -\left\Vert \mathbf{A}\mathbf{u}_{k}\right\Vert _{2}^{2}.\label{eq: dec obj supr-1}
\end{eqnarray}

It follows that

\begin{eqnarray}
\left|2f(\mathbf{x}_{k})-2f(\mathbf{x}_{k+\nicefrac{1}{2}})\right| & \leq & 2\left|\left\langle \mathbf{g}_{k},\mathbf{u}_{k}\right\rangle \right|+\left\Vert \mathbf{A}\mathbf{u}_{k}\right\Vert _{2}^{2}\nonumber \\
 & \leq & 2\left\Vert \mathbf{g}_{k}\right\Vert _{2}\cdot\left\Vert \mathbf{u}_{k}\right\Vert _{2}+c^{2}\left\Vert \mathbf{u}_{k}\right\Vert _{2}^{2}\label{eq: diff obj perturbation}\\
 & = & \left(2+c^{2}\eta_{0}\right)\eta_{0}\left\Vert \mathbf{g}_{k}\right\Vert _{2}^{2}.\nonumber 
\end{eqnarray}

The second inequality of (\ref{eq: diff obj perturbation}) comes
from (\ref{eq: lambda}) and the equality comes from (\ref{eq: up bnd u_k}).

Let 

\begin{equation}
\varphi(\eta)=\left(2+c^{2}\eta\right)\eta.\label{eq: function 2}
\end{equation}
By the definition of $\eta_{2}$, we know that $\varphi(\eta_{2})=\frac{1}{32c^{2}}$.
Since $\varphi(\eta)$ is a monotone increasing function of $\eta$
for $\eta>0$, and $\varphi(0)=0$, $\eta_{0}\leq\eta_{2}$, we have
that

\begin{equation}
0<\varphi(\eta_{0})\leq\varphi(\eta_{2})=\frac{1}{32c^{2}}.\label{eq: coef f  bound}
\end{equation}
Combining this with (\ref{eq: diff obj perturbation}), it follows
that

\begin{equation}
\left|2f(\mathbf{x}_{k+1/2})-2f(\mathbf{x}_{k})\right|\leq\frac{1}{32c^{2}}\left\Vert \mathbf{g}_{k}\right\Vert _{2}^{2}.\label{eq: diff obj perturbation 2}
\end{equation}

From this and (\ref{eq: reduced Obj}) obtained by Proposition \ref{prop :CG step},
it follows that, upon the execution of Step \ref{S-CG-call-CG-PR-step}
of Algorithm \ref{S-CG} ,

\begin{eqnarray}
 &  & 2f(\mathbf{x}_{k})-2f(\mathbf{x}_{k+1})\nonumber \\
 & = & \left[2f(\mathbf{x}_{k})-2f(\mathbf{x}_{k+\nicefrac{1}{2}})\right]+\left[2f(\mathbf{x}_{k+\nicefrac{1}{2}})-2f(\mathbf{x}_{k+1})\right]\nonumber \\
 & \geq & -\left|2f(\mathbf{x}_{k})-2f(\mathbf{x}_{k+\nicefrac{1}{2}})\right|+\left[2f(\mathbf{x}_{k+\nicefrac{1}{2}})-2f(\mathbf{x}_{k+1})\right]\nonumber \\
 & \geq & -\frac{1}{32c^{2}}\left\Vert \mathbf{g}_{k}\right\Vert _{2}^{2}+\frac{1}{16c^{2}}\left\Vert \mathbf{g}_{k}\right\Vert _{2}^{2}\label{eq: overall reduced Obj}\\
 & \geq & \frac{1}{32c^{2}}\left\Vert \mathbf{g}_{k}\right\Vert _{2}^{2}.\nonumber 
\end{eqnarray}

\end{proof}
Now we are ready to complete the proof of \textbf{Theorem \ref{Thm: Conv-1}}.
\begin{proof}
Since $\mathbf{x}_{LS}$ is a minimizer of $f(\mathbf{x})$ if and
only if 

\begin{equation}
\mathbf{g}(\mathbf{x}_{LS})=\mathbf{A}^{T}\left(\mathbf{A}\mathbf{x}_{LS}-\mathbf{y}\right)=0,\label{eq: zero grad}
\end{equation}
we know that for any $\mathbf{x}$ such that $f(\mathbf{x})>\varepsilon_{0}$,
$\left\Vert \mathbf{g}(\mathbf{x})\right\Vert _{2}>0$. 

Given a positive $\varepsilon$ such that $\varepsilon>\varepsilon_{0}$,
define 

\begin{equation}
\theta=\underset{\boldsymbol{x}}{\inf}\left\{ \left\Vert \mathbf{g}(\mathbf{x})\right\Vert _{2}\,|\,f(\mathbf{x})>\varepsilon\right\} .\label{eq: infimum grad}
\end{equation}
Then $\theta>0$.

Let $c$ be as specified in (\ref{eq: lambda}), $\eta_{1}=\frac{1}{4c^{2}}$,
and $\eta_{2}$ be the positive solution to the quadratic equation
(\ref{eq: equation of coef}) of $\eta$, $\eta_{l}=\min\left\{ \eta_{1},\eta_{2}\right\} $.
During the execution of\textbf{ }Algorithm \ref{S-CG}, the sequence
of the perturbations $\left(\mathbf{u}_{k}\right){}_{k=1}^{\infty}$
for all $k\geq1$ in (\ref{eq: Superiorization}) generated by execution
of Step \ref{S-CG-perturbation} of Algorithm \ref{S-CG},\textbf{
}satisfies (\ref{eq: summable}). Hence, there exists an integer $K$
such that $\left\Vert \mathbf{u}_{k}\right\Vert _{2}\leq\eta_{l}\theta$
for all $k\geq K$. Let $\mathbf{S}=\left\{ 2f(\mathbf{x}_{k})\,|\:k\geq K\right\} $
be the sequence of $2f(\mathbf{x}_{k})$ of which the $\mathbf{x}_{k}$
for $k\geq K$ is generated by an execution of Step \ref{S-CG-call-CG-PR-step}
of Algorithm \ref{S-CG}. By Lemma \ref{Lem: Overall step}, we know
that $0<2f(\mathbf{x}_{k+1})<2f(\mathbf{x}_{k})$, hence the sequence
in $\mathbf{S}$ is strictly decreasing and bounded.

We prove this theorem with the method of proof by contradiction. Suppose
that the Algorithm \ref{S-CG} never terminates. Then for any $k\geq K$,
it holds that

\begin{equation}
f(\mathbf{x}_{k})>\varepsilon.\label{eq: not terminated}
\end{equation}

Let 

\begin{equation}
\xi=\inf\left\{ 2f(\mathbf{x}_{k})\,\mid\,k\geq K\right\} .\label{eq: low bound dist}
\end{equation}
Now we prove that if the Algorithm \ref{S-CG} never terminates, then
there exists a $k\geq K$ such that 

\begin{equation}
2f(\mathbf{x}_{k})<\xi,\label{eq: contradiction}
\end{equation}
which contradicts the definition of $\xi$.

Let $\delta=\frac{\theta^{2}}{32c^{2}}$. By the definition of $\xi$,
we know that there exists an integer $k\geq K$ such that 

\begin{equation}
2f(\mathbf{x}_{k})\leq\xi+\frac{\delta}{2}.\label{eq: near low bound}
\end{equation}

From the facts that $\left\Vert \mathbf{u}_{k}\right\Vert _{2}\leq\eta_{l}\theta$
and $\left\Vert \mathbf{g}_{k}\right\Vert _{2}\geq\theta$ for $k\geq K$
with $\mathbf{g}_{k}$ defined in (\ref{eq:gradient at x_k-1}), we
know that the precondition of Lemma \ref{Lem: Overall step} is satisfied
for $k\geq K$. It follows from Lemma \ref{Lem: Overall step} that 

\begin{eqnarray}
2f(\mathbf{x}_{k+1}) & \leq & 2f(\mathbf{x}_{k})-\frac{1}{32c^{2}}\left\Vert \mathbf{g}_{k}\right\Vert _{2}^{2}\nonumber \\
 & \leq & 2f(\mathbf{x}_{k})-\delta\label{eq: lower}\\
 & \leq & \xi-\frac{\delta}{2}.\nonumber 
\end{eqnarray}
The second inequality of (\ref{eq: lower}) comes from the facts that
$\left\Vert \mathbf{g}_{k}\right\Vert _{2}\geq\theta$ and $\delta=\frac{\theta^{2}}{32c^{2}}$.
The third inequality of (\ref{eq: lower}) comes from (\ref{eq: near low bound}).
Then we know that (\ref{eq: contradiction}) holds for $k+1$, which
contradicts the definition of $\xi$. Hence\textbf{ }Algorithm \ref{S-CG}
terminates.
\end{proof}

\section*{References{\normalsize{}\label{sec:References}}}

\bibliographystyle{iopart-num}
\bibliography{References_Gabor_8_17_17}

\providecommand{\newblock}{}
\begin{thebibliography}{10}
\expandafter\ifx\csname url\endcsname\relax
  \def\url#1{{\tt #1}}\fi
\expandafter\ifx\csname urlprefix\endcsname\relax\def\urlprefix{URL }\fi
\providecommand{\eprint}[2][]{\url{#2}}
% Bibliography created with iopart-num v2.1
% /biblio/bibtex/contrib/iopart-num

\bibitem{Bjorck1996}
Bj{\"{o}}rck A 1996 {\em {Numerical Methods for Least Squares Problems}\/}
  (Philadelphia, PA: SIAM)

\bibitem{Barrett-2004}
Barrett H~H and Myers K~J 2004 {\em {Foundations of Image Science}\/} (Hoboken,
  NJ: John Wiley \& Sons)

\bibitem{Kim2008}
Kim K and Shevlyakov G 2008 {Why Gaussianity?} {\em IEEE Signal Processing
  Magazine\/} {\bf 25} 102--113

\bibitem{Vogel-2002}
Vogel C~R 2002 {\em {Computational Methods for Inverse Problems}\/}
  (Philadelphia, PA: SIAM)

\bibitem{Bovik-2000}
Bovik A~C 2000 {\em {Handbook of Image and Video Processing}\/} 1st ed (San
  Diego, CA: Academic Press)

\bibitem{Romberg2008}
Romberg J~K 2008 {Imaging via compressive sampling} {\em IEEE Signal Processing
  Magazine\/} {\bf 25} 14--20

\bibitem{Dai2000}
Dai Y, Han J, Liu G, Sun D, Yin H and Yuan Y 2000 {Convergence properties of
  nonlinear conjugate gradient methods} {\em SIAM Journal on Optimization\/}
  {\bf 10} 345--358

\bibitem{Hager2006}
Hager W and Zhang H 2006 {A survey of nonlinear conjugate gradient methods}
  {\em Pacific Journal of Optimization\/} {\bf 2} 35--58

\bibitem{Rockafellar1998}
Rockafellar R~T and Wets R~J~B 1998 {\em Variational Analysis\/} (Berlin
  Heidelberg: Springer)

\bibitem{Zhao-2012}
Zhao F, Noll D~C, Nielsen J~F and Fessler J~A 2012 {Separate magnitude and
  phase regularization via compressed sensing} {\em IEEE Transactions on
  Medical Imaging\/} {\bf 31} 1713--1723

\bibitem{Zibetti2016a}
Zibetti M~V~W, Pipa D~R and {De Pierro} A~R 2016 {Fast and exact unidimensional
  L2-L1 optimization as an accelerator for iterative reconstruction algorithms}
  {\em Digital Signal Processing\/} {\bf 48} 178--187

\bibitem{Combettes2011}
Combettes P~L and Pesquet J~C 2011 {Proximal splitting methods in signal
  processing} {\em {Fixed-Point Algorithms for Inverse Problems in Science and
  Engineering}\/} ed Bauschke H~H, Burachik R~S, Combettes P~L, Elser V, Luke
  D~R and Wolkowicz H (New York, NY: Springer) pp 185--212

\bibitem{Zibulevsky2010}
Zibulevsky M and Elad M 2010 {L1-L2 optimization in signal and image
  processing} {\em IEEE Signal Processing Magazine\/} {\bf 27} 76--88

\bibitem{Bioucas-Dias2007}
Bioucas-Dias J and Figueiredo M 2007 {A new TwIST: two-step iterative
  shrinkage/thresholding algorithms for image restoration} {\em IEEE
  Transactions on Image Processing\/} {\bf 16} 2992--3004

\bibitem{Beck2009}
Beck A and Teboulle M 2009 {A fast iterative shrinkage-thresholding algorithm
  for linear inverse problems} {\em SIAM Journal of Imaging Science\/} {\bf 2}
  183--202

\bibitem{Parikh2013}
Parikh N and Boyd S 2014 {Proximal algorithms} {\em Foundation and Trends in
  Optimization\/} {\bf 1} 127--239

\bibitem{Chambolle2004}
Chambolle A 2004 {An algorithm for total variation minimization and
  applications} {\em Journal of Mathematical Imaging and Vision\/} {\bf 20}
  89--97

\bibitem{Chambolle2011}
Chambolle A and Pock T 2011 {A first-order primal-dual algorithm for convex
  problems with applications to imaging} {\em Journal of Mathematical Imaging
  and Vision\/} {\bf 40} 120--145

\bibitem{Beck2009a}
Beck A and Teboulle M 2009 {Fast gradient-based algorithms for constrained
  total variation image denoising and deblurring problems} {\em IEEE
  Transactions on Image Processing\/} {\bf 18} 2419--2434

\bibitem{Censor2010}
Censor Y, Davidi R and Herman G~T 2010 {Perturbation resilience and
  superiorization of iterative algorithms} {\em Inverse Problems\/} {\bf 26}
  065008

\bibitem{Herman2012}
Herman G~T, Gardu{\~{n}}o E, Davidi R and Censor Y 2012 {Superiorization: An
  optimization heuristic for medical physics} {\em Medical Physics\/} {\bf 39}
  5532--5546

\bibitem{Hansen-1998}
Hansen P~C 1998 {\em {Rank-Deficient and Discrete Ill-Posed Problems: Numerical
  Aspects of Linear Inversion}\/} (Philadelphia, PA: SIAM)

\bibitem{Censor2014}
Censor Y, Davidi R, Herman G~T, Schulte R~W and Tetruashvili L 2014 Projected
  subgradient minimization versus superiorization {\em Journal of Optimization
  Theory and Applications\/} {\bf 160} 730--747

\bibitem{Garduno2014}
Garduño E and Herman G~T 2014 Superiorization of the {ML-EM} algorithm {\em
  IEEE Transactions on Nuclear Science\/} {\bf 61} 162--172

\bibitem{Schrapp2014}
Schrapp M~J and Herman G~T 2014 Data fusion in x-ray computed tomography using
  a superiorization approach {\em Review of Scientific Instruments\/} {\bf 85}
  053701

\bibitem{Helou2016}
Helou E~S, Zibetti M~V~W and Miqueles E~X 2017 {Superiorization of incremental
  optimization algorithms for statistical tomographic image reconstruction}
  {\em Inverse Problems\/} {\bf 33} 044010

\bibitem{Herman2009}
Herman G~T 2009 {\em {Fundamentals of Computerized Tomography: Image
  Reconstruction from Projections}\/} 2nd ed (London: Springer-Verlag)

\bibitem{Dai2013}
Dai Y~H and Kou C~X 2013 {A nonlinear conjugate gradient algorithm with an
  optimal property and an improved Wolfe line search} {\em SIAM Journal on
  Optimization\/} {\bf 23} 296--320

\bibitem{Luenberger2008}
Luenberger D~G and Ye Y 2008 {\em {Linear and Nonlinear Programming}\/} 3rd ed
  (New York, NY: Springer US)

\bibitem{Fraysse2000}
Frayss{\'{e}} V and Giraud L 2000 {A set of conjugate gradient routines for
  real and complex arithmetics} Tech. Rep. TR/PA/00/47 CERFACS Toulouse, France

\bibitem{Bolz2003}
Bolz J, Farmer I, Grinspun E and Schr{\"{o}}der P 2003 {Sparse matrix solvers
  on the GPU: Conjugate gradients and multigrid} {\em ACM Transactions on
  Graphics (TOG) - Proceedings of ACM SIGGRAPH 2003\/} {\bf 22} 917--924

\bibitem{Shewchuk-1994}
Shewchuk J~R 1994 {An introduction to the conjugate gradient method without the
  agonizing pain} Tech. Rep. August, 1994 Carnegie Mellon University
  Pittsburgh, PA

\end{thebibliography}

\end{document}